\def\<{\langle}
\def\>{\rangle}
\numberwithin{equation}{section}
\def\<{\langle}
\def\>{\rangle}
\def\-{\overline}
\def\-{\overline}
\newtheorem{theorem}{Theorem}[section]
\newtheorem{lemma}[theorem]{Lemma}
\newtheorem{Corollary}[theorem]{Corollary}
\newtheorem{Definition}[theorem]{Definition}
\newtheorem{example}[theorem]{Example}
\newtheorem{remark}[theorem]{Remark}
\begin{document}
\title{\bf On the regularity of CR mappings between CR manifolds of hypersurface type}

\author{S. Berhanu \thanks{Work supported in part by NSF DMS 1300026}\\Department of Mathematics\\Temple University\\Philadelphia, PA 19122 \and Ming Xiao\\Department of Mathematics\\Rutgers University\\New Brunswick, NJ 08854}

  \maketitle

\begin{abstract}
We prove smooth and analytic versions of the classical Schwarz reflection principle for transversal CR mappings between two Levi-nondegenerate CR manifolds of hypersurface type.
\end{abstract}


\section{Introduction}
This article studies the smoothness and analyticity of CR mappings between two CR manifolds $M$ (the source) and $M'$ (the target) where $M$ is an abstract CR manifold of hypersurface type of CR dimension $n$ and $M'\subset \mathbb C^{N+1}$ ($N>n \geq 1$) is a hypersurface. It will be assumed that both $M$ and $M'$ are Levi nondegenerate. When the target manifold $M'$ is strongly pseudoconvex, smoothness results for CR mappings were proved in our work [BX] under weaker assumptions on the abstract CR manifold $M$. For example, one of the results in [BX] showed that when $M'\subset \mathbb C^{n+k}$ is strongly pseudoconvex and smooth, and $M$ is a smooth abstract CR manifold of CR dimension $n$ such that its Levi form at each characteristic covector has a nonzero eigenvalue, then any $C^k$ CR mapping whose differential is injective on the CR bundle of $M$ is smooth on a dense open set. Here we prove an analogous result when $M'$ is assumed to be Levi nondegenerate, but not strongly pseudoconvex. We show by means of an example that in this case, one has to impose a restriction on the signature of $M'$.

The main results of the paper may be viewed as smooth and analytic versions of the Schwarz reflection principle for CR maps. Among the numerous works on various versions of this principle, we mention [Fe], [Le], [Pi], [BJT], [Be], [Fr1], [Fr2], [BN], [CKS], [CGS], [CS], [DW], [EH], [EL], [Hu1],[Hu2], [KP], [La1], [La2], [La3], [M], [NWY], [Tu], and [W]. The book [BER] contains an account and many more references when the manifolds are real analytic or algebraic.

In Section 2 we state our main result and present two examples that illustrate why the assumptions in our main result can not be relaxed. In the same section, in order to substantiate our examples, we construct an example of a $C^k$ CR function (for any positive integer $k$) on a smooth strongly pseudoconvex manifold $M$ that is not $C^{\infty}$ on any nonempty open set in $M$. In Section 3 we present the proof of our main result.

\bigskip

\bigskip

{\bf Acknowledgement:} The authors thank Professor Xiaojun Huang for a helpful discussion on this work.

\section{Main result and preliminaries}

Let $M$ and $M'$ be  CR manifolds with CR bundles $\mathcal V$ and $\mathcal V'$ respectively. A differentiable mapping $F:M\to M'$ is called a CR mapping if $dF(\mathcal V)\subset \mathcal V'$. When $M' \subset \mathbb C^{N'}$, this is equivalent to saying that the components of $F=(F_1,\dots,F_{N'})$ are CR functions. The mapping $F$ is called CR transversal at $p\in M$ if $dF(\mathbb CT_pM)$ is not contained in $\mathcal V'_{F(p)}+\overline{\mathcal V'_{F(p)}}$.

The main result of this article is as follows:

\begin{theorem} \label{thm1}
Let $M$ be a smooth abstract CR manifold of hypersurface type of CR dimension $n$ and $M' \subset \mathbb{C}^{N+1},(n \geq 1, n < N \leq 2n)$ be a smooth real hypersurface. Assume that $M$ and $M'$ are Levi-nondegenerate and $M'$ has signature $(l,N-l),~l>0$ the number of positive eigenvalues of the Levi form. Let $F=(F_{1},\cdots,F_{N+1}): M \rightarrow M'$ be a CR-transversal CR mapping of class $C^{N-n+1}.$ Assume that $l \leq n$ and $N-l \leq n.$ Then $F$ is smooth on a dense open subset of $M.$
\end{theorem}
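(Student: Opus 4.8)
The plan is to work locally near a point $p\in M$ and exploit the defining equation of the target together with the Levi-nondegeneracy of both manifolds. First I would choose local CR coordinates and bring $M'$ into a Chern--Moser-type normal form, writing $(z,w)\in\mathbb{C}^{N}\times\mathbb{C}$ with defining equation $\mathrm{Im}\,w=\langle z,\bar z\rangle_{l}+O(|z|^{3})$, where $\langle z,\bar z\rangle_{l}=\sum_{j\le l}|z_{j}|^{2}-\sum_{j>l}|z_{j}|^{2}$ encodes the signature $(l,N-l)$. Writing $F=(f,g)$ with $f=(f_{1},\dots,f_{N})$ and $g=F_{N+1}$, the inclusion $F(M)\subset M'$ becomes the single scalar identity $\tfrac{1}{2i}(g-\bar g)=\langle f,\bar f\rangle_{l}+O(|f|^{3})$ holding on $M$, and this identity, differentiated, is the engine of the whole argument.

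Next I would differentiate along the tangential CR directions. Let $L_{1},\dots,L_{n}$ span $\mathcal V$; since the components of $F$ are CR we have $\bar L_{j}F_{k}=0$, while $\bar L_{j}\bar F_{k}=\overline{L_{j}F_{k}}$ carries the essential reflection data. Applying $\bar L_{j}$ to the basic identity annihilates every holomorphic derivative and yields the $n$ relations $-\tfrac{1}{2i}\,\overline{L_{j}g}=\langle f,\overline{L_{j}f}\rangle_{l}+(\text{higher order})$. Because $N>n$, these $n$ relations cannot by themselves recover all $N$ antiholomorphic $z$-directions; the point is therefore to iterate, applying products $\bar L_{j_{1}}\cdots\bar L_{j_{s}}$ of order up to roughly $N-n$, which is exactly why the regularity hypothesis $F\in C^{N-n+1}$ is imposed. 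The objective is to assemble from these prolonged identities a reflection identity expressing the full antiholomorphic datum $\bar F$ (equivalently all $\overline{L^{\alpha}F}$ of bounded order) as a smooth function of the tangential CR jet of $F$, of the form $\bar F=\Phi\bigl(F,\{L^{\beta}F\}\bigr)$ with $\Phi$ smooth.

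The hard part will be establishing this reflection identity, and the obstruction is essentially linear-algebraic: one must show that a generalized Levi matrix, built from the iterated Levi forms of $M'$ pulled back by $dF$ and combined with the Levi form of $M$, has maximal rank. Here CR-transversality guarantees that the normal ($w$-)direction is genuinely detected, Levi-nondegeneracy of $M'$ supplies the nondegenerate Hermitian form $\langle\cdot,\bar{\,\cdot}\,\rangle_{l}$, and the two inequalities $l\le n$ and $N-l\le n$ are precisely what force the positive and the negative parts of this form to be separately resolvable using only the $n$ CR directions available on $M$; the example in Section 2 shows this is sharp. This rank condition holds generically but fails along a closed set of positive codimension, which is exactly why smoothness is asserted only on a dense open subset: one restricts to the open dense set on which the relevant determinant is nonzero.

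Finally I would convert the reflection identity into actual regularity by a bootstrapping argument. With $\bar F=\Phi(F,\{L^{\beta}F\})$ in hand, combining it with the CR equations $\bar L_{j}F_{k}=0$ produces a closed first-order system governing $F$ whose regularity propagates. Since $M$ is abstract and need not be embeddable (the case $n=1$ is allowed), I would avoid any appeal to holomorphic extension and instead use the intrinsic machinery: the Baouendi--Treves approximation theorem together with the subelliptic regularity of $\bar\partial_{b}$ on the Levi-nondegenerate manifold $M$, or the regularity lemma already available from [BX], to bootstrap $C^{N-n+1}\Rightarrow C^{N-n+2}\Rightarrow\cdots\Rightarrow C^{\infty}$ on the set where the reflection identity holds. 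I expect the derivation of the reflection identity, namely the rank and nondegeneracy analysis of the iterated Levi data under the signature constraints, to be the genuine heart of the proof, with the concluding bootstrap being comparatively routine given the tools developed earlier.
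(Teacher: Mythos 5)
There is a genuine gap at the center of your plan: you assume the maximal-rank condition on your ``generalized Levi matrix'' holds generically, and you propose to obtain the dense open set by restricting to where the relevant determinant is nonzero. But the nondegeneracy locus can be empty, so no determinant argument can yield density. For instance, the linear map $(z,w)\mapsto (z,0,\dots,0,w)$ from $\{\mathrm{Im}\,w=|z|^2\}$ into $\{\mathrm{Im}\,w=\sum_{i=1}^{n}|z_i|^2-\sum_{j=n+1}^{N}|z_j|^2\}$ satisfies every hypothesis of the theorem, yet the spaces $E_i(p)$ are contained in $\mathrm{Span}\{e_1,\dots,e_n,e_{N+1}\}$ for all $i$ and all $p$, so $\mathrm{rank}_i(F,p)=n+1<N+1$ identically: the map is degenerate at \emph{every} point. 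The paper's proof therefore splits $M$ into the open set $\Omega_1$ where $\mathrm{rank}_{N-n+1}(F,p)=N+1$ (handled by Lamel-type $k_0$-nondegeneracy, Theorem 2.3 of [BX]) and the open set $\Omega_2$ where the rank is bounded by $N$ throughout a neighborhood; on $\Omega_2$ the rank stabilizes at a degenerate value $n+d-1$ near a dense set of points, and the crucial ingredient missing from your proposal is that the resulting linear dependencies among the components of $\rho_{Z'}(F,\overline F)$ are solved by CR functions $G_i^j$ of finite smoothness, which are adjoined as extra unknowns so that the implicit-function step goes through even at degenerate points (Theorem 3.5, after the normalization Lemma 3.2, which rests on a matrix-completion result of [BHu] adapted to the indefinite form $E(n,N)$ -- note the paper first sharpens the signature hypothesis to $l=n$ via the identity $\lambda I_n=AE(l,N)A^{*}$). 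Density of $\Omega_1\cup\Omega_2$ together with rank stabilization, not genericity of nondegeneracy, is what produces the dense open set in the conclusion.

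The second gap is your bootstrap. Subelliptic regularity of $\overline{\partial}_b$ cannot regularize CR functions: they lie in the kernel of $\overline{\partial}_b$, and Theorem 2.7 of this very paper constructs, on any smooth strongly pseudoconvex boundary, $C^k$ CR functions that are nowhere smooth, so no intrinsic estimate upgrades $C^{N-n+1}$ to $C^{\infty}$ from the reflection identity by incremental gain of derivatives (which would in any case be blocked by the loss of derivatives each time $L^{\alpha}$, $|\alpha|\leq l$, is applied, and by the merely finite smoothness of the $G_i^j$). Moreover, you say you would avoid any appeal to holomorphic extension, but one-sided extension is precisely the paper's mechanism: after reducing to $M$ strongly pseudoconvex (if the Levi form of $M$ has eigenvalues of both signs, CR functions are automatically smooth, disposing of that case), Theorem 2.9 of [BX] -- built on Treves' integrable structure agreeing with $\mathcal V$ to infinite order at $p$, which is how abstractness of $M$ is overcome -- extends $F$ and the $G_i^j$ almost analytically to the side $t>0$ with edge $M$; the solved identity $F_j=\psi_j(F,\overline F,(L^{\alpha}\overline F)_{1\leq|\alpha|\leq l},\overline G)$, obtained from Lamel's almost holomorphic implicit function theorem, then defines reflected functions $h_j(z,s,t)$ on the side $t<0$ with tempered growth and with $\overline{\partial}$ vanishing to infinite order at $M$, satisfying $b_{+}F_j=b_{-}h_j$; smoothness follows in one stroke from the edge-of-the-wedge-type Theorem V.3.7 of [BCH]. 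Your normal form, the differentiated identities, the counting behind the $C^{N-n+1}$ hypothesis, and the heuristic role of $l\leq n$, $N-l\leq n$ are all in the right spirit, but without the degenerate-rank branch and the extension--reflection--edge argument the proof does not close.
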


We remark that if $l>n$ or $N-l>n$, Example $2.4$ will show that the Theorem will not hold. This explains the assumption $N\leq 2n$ in Theorem $2.1$. Note that the case $l=0$ (and therefore also $l=N$) was treated in $[BX]$, and therefore, we may always assume that $0<l<N$. Since CR functions are $C^{\infty}$ whenever the Levi form has a positive and a negative eigenvalue, we may also assume that $M$ is strongly pseudoconvex. Our methods also lead to the following analyticity result:

\begin{theorem}\label{thm2.1}
Let $M\subset \mathbb C^{n+1}$  and $M' \subset \mathbb{C}^{N+1},(n \geq 1, n < N \leq 2n)$ be real analytic hypersurfaces. Assume that $M$ and $M'$ are Levi-nondegenerate and $M'$ has signature $(l,N-l),~l>0$ the number of positive eigenvalues of the Levi form. Let $F=(F_{1},\cdots,F_{N+1}): M \rightarrow M'$ be a CR-transversal CR mapping of class $C^{N-n+1}.$ Assume that $l \leq n$ and $N-l \leq n.$ Then $F$ is real analytic on a dense open subset of $M.$
\end{theorem}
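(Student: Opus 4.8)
The plan is to derive Theorem~\ref{thm2.1} from Theorem~\ref{thm1} by upgrading smoothness to real analyticity, running the analytic analogue of the argument behind Theorem~\ref{thm1}. Since a real analytic hypersurface is in particular smooth, Theorem~\ref{thm1} already gives that $F$ is of class $C^\infty$ on a dense open subset $U \ss M$; as in the remarks following Theorem~\ref{thm1}, we may assume $0 < l < N$ and that $M$ is strongly pseudoconvex. It thus suffices to show that $F$ is real analytic on a dense open subset of $U$. Near a point $p \in U$ I would fix a real analytic local defining function $\rho'(Z', \ov{Z'})$ of $M'$ at $F(p)$ and local CR coordinates on $M$ at $p$, so that $F$ is a smooth CR map obeying $\rho'(F, \ov F) = 0$.

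The next step is to produce a real analytic reflection identity. Complexifying the antiholomorphic arguments of $\rho'$ and differentiating $\rho'(F, \ov F) = 0$ along the CR vector fields of $M$, one gets a finite system coupling $\ov F$ to $F$ and finitely many of its CR derivatives. The Levi-nondegeneracy of $M'$, the CR-transversality of $F$, and the hypotheses $l \le n$, $N - l \le n$ together make $F$ finitely nondegenerate on a dense open subset of $U$: the balanced signature condition is exactly what lets one invert the relevant Levi-type matrix and solve for $\ov F$ as a real analytic function $\Phi$ of $F$ and its CR derivatives. This is where the necessity of $l \le n$ and $N - l \le n$, shown by Example~2.4, makes itself felt, and where the real analyticity of $M'$ (as opposed to the mere smoothness assumed in Theorem~\ref{thm1}) enters, guaranteeing that $\Phi$ is real analytic.

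With such an identity in hand I would run the reflection argument at the level of the analytic wave front set. Because $F$ is CR on a strongly pseudoconvex hypersurface it extends holomorphically to one side, so $WF_a(F)$ is confined to a single conormal ray, while $WF_a(\ov F)$ lies in the opposite ray. Since $\Phi$ is real analytic and CR differentiation preserves the conormal direction, the identity $\ov F = \Phi(F, \dots)$ forces $WF_a(\ov F)$ into the ray of $F$ as well; the two rays meet only at the zero section, so $WF_a(F)$ is empty over a dense open subset of $U$. I would make this quantitative with the analytic FBI transform, much as in the proof of Theorem~\ref{thm1} but with analytic weights. As emptiness of the analytic wave front set is equivalent to real analyticity, the theorem follows.

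The step I expect to be the main obstacle is the analytic microlocal analysis of the preceding paragraph. Unlike the finite-nondegeneracy computation behind the reflection identity, which is algebraic and is inherited essentially verbatim from the smooth setting, the analytic wave front estimates admit no smooth cutoffs and require quantitative almost-analytic extensions together with exponentially weighted FBI bounds; transporting the propagation-of-regularity step from $C^\infty$ to the real analytic category, while simultaneously controlling the degeneracy locus so that the set on which the reflection identity is solvable stays open and dense, is the technically demanding heart of the argument.
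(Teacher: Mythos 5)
There is a genuine gap at the pivot of your argument: the claim that Levi-nondegeneracy of $M'$, CR-transversality, and the signature conditions $l\le n$, $N-l\le n$ force $F$ to be \emph{finitely nondegenerate} on a dense open subset. This is false, and the failure is not a technicality. Take $M=\{\mathrm{Im}\,w=\sum_{j=1}^n|z_j|^2\}\subset\mathbb C^{n+1}$, $M'=\{\mathrm{Im}\,w=\sum_{j=1}^n|z'_j|^2-\sum_{j=n+1}^N|z'_j|^2\}\subset\mathbb C^{N+1}$ with $n<N\le 2n$ (so $l=n$ and $N-l\le n$), and the linear embedding $F(z,w)=(z_1,\dots,z_n,0,\dots,0,w)$. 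This $F$ is real analytic and CR-transversal, yet along $F$ the components $n+1,\dots,N$ of $\rho_{Z'}(F,\overline F)$ vanish identically on $M$, so $E_k(p)\subset\mathbb C^n\times\{0\}^{N-n}\times\mathbb C$ and $\mathrm{rank}_k(F,p)=n+1<N+1$ for every $k\ge 1$ and \emph{every} $p\in M$. Thus the degeneracy locus can be all of $M$, the ``Levi-type matrix'' you propose to invert is never invertible there, and your reflection identity $\overline F=\Phi(F,\dots)$ cannot be produced by the argument you sketch. The actual role of the signature hypotheses in the paper is different: via $\lambda I_n=AE(l,N)A^*$ they force $l=n$, hence $\mathrm{rank}_1(F,p)=n+1$ and injectivity of $dF$ on the CR bundle (Lemma 3.1), which is far weaker than finite nondegeneracy. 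The heart of the paper is precisely the handling of open sets of degenerate points: one stratifies $M$ into $\Omega_1$ (where $\mathrm{rank}_{N-n+1}=N+1$; there Theorem 2 of [La2] gives analyticity directly) and $\Omega_2$ (where the rank stabilizes at some $n+l\le N$ near a dense set of points), and on $\Omega_2$ one must manufacture the auxiliary CR functions $G_i^j$ of Theorem \ref{thm4}, solving equation (3.13) for the ``missing'' components $a_j$, $n+l\le j\le N$, after the normalization of Lemma \ref{lemma1}; the implicit-function step is then applied to the enlarged system (3.14)--(3.17) involving $\overline G$ as well as $\overline F$. Your proposal has no counterpart to this construction, so it does not cover the case that actually requires a proof.

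A secondary divergence: your concluding step via analytic wave front sets and weighted FBI estimates, which you flag as the main obstacle, is avoided entirely by the paper in the analytic category. Since $\varphi$ is real analytic, the CR vector fields complexify holomorphically in $s+it$ (no almost-analytic extensions are needed); strong pseudoconvexity extends $F_j$ and the $G_i^j$ holomorphically to the side $t>0$, so $\overline F(\cdot,-t)$, $M^\alpha\overline F(\cdot,-t)$, $\overline G(\cdot,-t)$ are holomorphic on $t<0$, the holomorphic implicit function theorem gives $F_j=\psi_j(\overline F,(M^\alpha\overline F)_{1\le|\alpha|\le l},\overline G)$ with $\psi$ holomorphic, and two-sided extendability plus Lemma 9.2.9 of [BER] yields holomorphy in a full neighborhood. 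So even where your identity is available, the elementary two-sided extension argument replaces the microlocal machinery; but the proposal cannot be repaired without adding the degenerate-stratum analysis above.
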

It is well known that in Theorem $2.2$, if $M_1\subset M$ denotes the dense subset where $F$ is real analytic, then $F$ extends as a holomorphic map in a neighborhood of each point of $M_1$. As a consequence of Theorem 2.1, Theorem 2.2 and the main result in [BX], we have the following:

\begin{Corollary}\label{coro23}
Let $M\subset \mathbb{C}^{n}$  and  $M' \subset \mathbb{C}^{n+1} (n \geq 2)$ be real analytic (resp. smooth) hypersurfaces. Assume that $M$ and $M'$ are  Levi-nondegenerate and $F: M \rightarrow M'$ is a CR-transversal CR mapping of class $C^2.$ Then $F$ is real analytic (resp. smooth) on a dense open subset of $M.$
\end{Corollary}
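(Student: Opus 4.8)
The plan is to run a case distinction on the signature $(l,n-l)$ of the Levi form of $M'$ and, in each case, reduce to one of the three results already at hand. Note first that $M\subset\mathbb{C}^n$ has CR dimension $n-1$ and $M'\subset\mathbb{C}^{n+1}$ has CR dimension $n$. In the notation of Theorems~\ref{thm1} and \ref{thm2.1} this means we take the source CR dimension to be $n-1$ and $N=n$; the standing inequalities $n-1\ge 1$, $n-1<n$, and $n\le 2(n-1)$ then all hold because $n\ge 2$, and the regularity demanded there is $C^{N-(n-1)+1}=C^2$, matching the hypothesis on $F$. In the notation of [BX] the source again has CR dimension $n-1$ and the target lies in $\mathbb{C}^{(n-1)+k}=\mathbb{C}^{n+1}$, forcing $k=2$, so once more $C^k=C^2$. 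Since the signature of a Levi-nondegenerate hypersurface is locally constant, we may argue near a point and assemble the resulting dense open sets at the end.

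If $0<l<n$, the Levi form of $M'$ is indefinite and both numerical conditions of Theorems~\ref{thm1}/\ref{thm2.1} hold, since $l\le n-1$ and $n-l\le n-1$ are each equivalent to $1\le l\le n-1$. I would then apply Theorem~\ref{thm1} in the smooth category, and Theorem~\ref{thm2.1} in the real-analytic category, directly to $F$, obtaining smoothness (resp. real-analyticity) on a dense open subset of $M$.

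The remaining case is $l\in\{0,n\}$, where $M'$ is strongly pseudoconvex (after possibly reversing the sign of its defining function). Here Theorems~\ref{thm1}/\ref{thm2.1} no longer apply and I would invoke [BX]. Its hypothesis that the Levi form of $M$ have a nonzero eigenvalue at each characteristic covector is immediate from Levi-nondegeneracy. The one genuine point---the hypothesis that $dF$ be injective on the CR bundle of $M$---I would extract from the standard Levi-form transformation law: differentiating $\rho'\circ F\equiv 0$ along $M$ once by a $(0,1)$ and once by a $(1,0)$ CR vector field and using that the $F_j$ are CR produces a scalar function $a$ with
\[
\mathcal{L}_{M'}\big(dF(X),\overline{dF(Y)}\big)=a\,\mathcal{L}_{M}(X,\bar Y),\qquad X,Y\in T^{1,0}_pM,
\]
in which $a$ is nonvanishing precisely because $F$ is CR transversal. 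If $dF(X)=0$ for some $X\in T^{1,0}_pM$, the identity forces $\mathcal{L}_M(X,\bar Y)=0$ for all $Y$, hence $X=0$ by the Levi-nondegeneracy of $M$; thus $dF$ is injective on $T^{1,0}M$. With every hypothesis verified, [BX] gives smoothness of $F$ on a dense open subset, settling the smooth case.

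For the real-analytic version of this last case I would promote smoothness to real-analyticity through the analytic reflection principle: a smooth CR-transversal CR map between real-analytic Levi-nondegenerate hypersurfaces extends holomorphically across $M$ near each point where it is smooth, and is therefore real-analytic there. I expect the injectivity of $dF$ on the CR bundle to be the main thing to get right, since it is the only step that is not pure bookkeeping and is exactly where Levi-nondegeneracy of the source and CR transversality must be combined; a secondary point to handle with care is ensuring that the analytic reflection step covers the strongly pseudoconvex case, so that Theorem~\ref{thm2.1} and [BX] together account for all signatures in the real-analytic category.
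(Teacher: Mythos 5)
Your proposal is correct and follows essentially the same route the paper intends: Corollary \ref{coro23} is presented there precisely as a consequence of Theorem \ref{thm1}, Theorem \ref{thm2.1} and the main result of [BX], which is exactly your case split on the signature of $M'$ --- the indefinite cases $0<l<n$ reduce to Theorems \ref{thm1}/\ref{thm2.1} with source CR dimension $n-1$, $N=n$ and class $C^{2}$ (your numerology checks out), while the definite cases $l\in\{0,n\}$ go to [BX], whose hypothesis that $dF$ be injective on the CR bundle you correctly extract from CR transversality plus Levi-nondegeneracy of $M$; this is the abstract form of the identity $\lambda I_{n}=AE(l,N)A^{*}$ in Section 3 of the paper, which shows the matrix $A$ has rank $n$.

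One caution on your final step: the blanket claim that a smooth CR-transversal map between real-analytic Levi-nondegenerate hypersurfaces extends holomorphically \emph{near every point where it is smooth} overstates the reflection principle --- without a finite nondegeneracy hypothesis at the point (as in [La2]) such pointwise analyticity is not available, and the correct statement, proved in [EL] and quoted by the paper immediately after the corollary, gives real-analyticity only on a dense open subset. Since a dense open subset of the dense open set where [BX] yields smoothness is still dense and open in $M$, this weaker (and true) version suffices, so your argument stands once that citation is adjusted; alternatively, one can run the paper's $\Omega_{1}$/$\Omega_{2}$ decomposition together with the analytic case of Theorem \ref{thm4}, whose proof adapts to the strongly pseudoconvex target with $E(n,N)$ replaced by the identity matrix.
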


When $F$ is assumed to be $C^{\infty}$, Corollary \ref{coro23} in the real analytic case was proved in [EL]. Corollary \ref{coro23} implies that a result on finite jet determination proved in [EL] (see Corollary 1.3 in [EL]) holds under a milder smoothness assumption:

\begin{Corollary}
Let $M\subset \mathbb{C}^{n}$ and $M'\subset \mathbb{C}^{n+1} (n\geq 2)$ be smooth connected hypersurfaces which are Levi-nondegenerate, and $f:M \rightarrow M'$ and $g:M\rightarrow M'$  transversal CR mappings of class $C^2$. If for any $p$ in some dense open subset of $M$, the jets at $p$  of $f$ and $g$ satisfy $j_p^4f=j_p^4g$, then $f=g$.
\end{Corollary}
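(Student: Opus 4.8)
The plan is to deduce this statement from the $C^\infty$ finite jet determination theorem of [EL] (Corollary 1.3 in [EL]) by first upgrading the regularity of $f$ and $g$ on a large set using Corollary~\ref{coro23}. The point is that the hypotheses of [EL] demand $C^\infty$ maps, whereas here we are given only $f,g\in C^2$; Corollary~\ref{coro23} supplies precisely the missing smoothness on a dense open subset, which is exactly where the $4$-jets live and are compared.

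First I would apply Corollary~\ref{coro23} (smooth case) to each of $f$ and $g$ separately. Since $M\subset\mathbb C^{n}$ and $M'\subset\mathbb C^{n+1}$ with $n\geq 2$ are Levi-nondegenerate and $f,g$ are transversal CR mappings of class $C^{2}$, the corollary yields dense open subsets $\Omega_f,\Omega_g\subset M$ on which $f$, respectively $g$, is smooth. Let $U$ denote the dense open subset of $M$ on which $j_p^4 f=j_p^4 g$ is assumed to hold; note that the $4$-jets are only meaningful once the maps are known to be of class $C^{4}$, so it is natural to compare them on the smooth loci. Since a finite intersection of dense open subsets is again dense and open, the set $W:=\Omega_f\cap\Omega_g\cap U$ is a dense open subset of $M$.

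On $W$ both $f$ and $g$ are genuine $C^\infty$ transversal CR mappings between the Levi-nondegenerate hypersurfaces $M$ and $M'$, and their $4$-jets agree at every point of $W$. At each $p\in W$ I would then invoke the finite jet determination result of [EL] (Corollary 1.3), which asserts that such a smooth transversal CR map is determined by its $4$-jet at a point; consequently $f=g$ on a neighborhood of $p$. Letting $p$ range over $W$ shows that $f=g$ on the dense set $W$.

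Finally, I would propagate this coincidence to all of $M$ by continuity. The mapping $(f,g):M\to M'\times M'$ is continuous since $f,g\in C^{2}$, so the coincidence set $\{q\in M:\ f(q)=g(q)\}$ is closed; as it contains the dense set $W$, it must equal $M$, giving $f\equiv g$. The one genuinely substantive ingredient is the regularity step: everything hinges on Corollary~\ref{coro23} placing us inside the $C^\infty$ regime of [EL] on a dense open set, after which the argument is merely a Baire-type intersection followed by a continuity/density closure. The main obstacle is therefore not in this corollary itself but lives upstream, in establishing the smoothness results Theorems~\ref{thm1} and \ref{thm2.1} (and hence Corollary~\ref{coro23}) that make the reduction possible.
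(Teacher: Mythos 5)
Your overall route --- upgrade $f$ and $g$ to $C^\infty$ on a dense open set via Corollary \ref{coro23}, then feed the smooth maps into Corollary 1.3 of [EL], then close up by continuity --- is exactly the derivation the paper intends (the paper gives no more detail than that one sentence). The issue is with the quantifier in the hypothesis, and it matters. You read the hypothesis as ``$j_p^4f=j_p^4g$ at \emph{every} point $p$ of a dense open set $U$,'' and under that reading your proof is correct but the machinery is idle: a $4$-jet contains the $0$-jet, so the hypothesis already gives $f(p)=g(p)$ for all $p\in U$, and continuity of the $C^2$ maps yields $f\equiv g$ with no appeal to Corollary \ref{coro23} or to [EL] whatsoever. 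The fact that your argument never genuinely needs its two main ingredients is the symptom that the statement is meant differently: as in finite jet determination results generally, and in Corollary 1.3 of [EL] in particular, the content is \emph{single-point} determination --- there is a dense open subset $\Sigma\subset M$ such that equality of the two $4$-jets at any \emph{one} point $p\in\Sigma$ already forces $f=g$ on all of the connected manifold $M$. Under your reading the corollary would be a triviality not worth deducing from Theorems \ref{thm1} and \ref{thm2.1}.

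Under the intended reading your proof has a genuine gap. Given jet agreement at a single point $p_0$ only, you must first arrange that $p_0$ lies in the smooth locus, i.e.\ the dense open set $\Sigma$ has to be constructed inside $\Omega_f\cap\Omega_g$ (further intersected with the set where the nondegeneracy hypotheses of [EL] hold), which your write-up does not do; more seriously, [EL] applies to maps that are smooth on a connected manifold, so applied to the connected component $W_0$ of the smooth locus containing $p_0$ it yields $f=g$ only on $W_0$. Your final continuity step then gives $f=g$ on $\overline{W_0}$, not on $M$: the coincidence set is closed, but $W_0$ need not be dense in $M$, and for maps that are merely $C^2$ off the smooth locus there is no obvious unique continuation across $M\setminus(\Omega_f\cap\Omega_g)$. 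A complete proof of the single-point statement must address this propagation (for instance by tracking where all the relevant jets of $f$ and $g$ agree and showing that set is suitably open and closed), and that is precisely the step your version of the argument skips.
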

The following examples show that in Theorems $2.1$ and $2.2$, neither the hypothesis on the signature of $M'$ nor the transversality assumption on $F$ can be dropped.

\begin{example}\label{eg1}
Let $M \subset \mathbb{C}^{n+1} (n \geq 1)$ be the hypersurface given by \newline $\{(z_{1},\dots,z_{n},w) \in \mathbb{C}^{n+1} :\mathrm{Im}\,w=\sum_{i=1}^{n}|z_{i}|^{2}\}.$ Let $M' \subset \mathbb{C}^{N+1}\, (N \geq n+2)$ be defined as $\{(z_{1},\cdots,z_{N},w) \in \mathbb{C}^{N+1}: \mathrm{Im} \,w= \sum_{i=1}^{n+1}|z_{i}|^2 +\sum_{j=n+2}^{N-1}\epsilon_j|z_{j}|^2 - |z_{N}|^2\}$, where each $\epsilon_j\in\{1,-1\}$.  Let $f$
be a $C^{N-n+1}$ CR function on $M$ which not smooth on any nonempty open subset of $M$ (see Theorem $2.7$ below for an example of such).
Then
$F(z_1,\dots,z_n,w)=(z_{1},\dots,z_{n},f(z_1,\dots,z_n,w),0,\dots 0,f(z_1,\dots,z_n,w),w)$ is a CR-transversal map of class $C^{N-n+1}$ from $M$ to $M'.$  Clearly $F$ is not smooth on any nonempty open subset of $M$ and, hence, since we may assume in Theorem \ref{thm1} that $M'$ is not strongly pseudoconvex and that therefore when $l>n$, $N\geq n+2$, Theorem $2.1$ does not hold. Likewise, the theorem does not hold when $N-l>n$. It follows that for the theorem to hold, we need to assume that $l\leq n,\,N-l\leq n$ and hence $N\leq 2n$.
\end{example}

\begin{example}\label{eg2}
Let $M \subset \mathbb{C}^{n}\,(n \geq 2)$ be given by $\{(z_{1},\cdots,z_{n-1},w) \in \mathbb{C}^{n}:\mathrm{Im}\,w=\sum_{i=1}^{n-1}|z_{i}|^2\}$ and define $M' \subset \mathbb{C}^{n+1}$ by  $M'=\{(z_{1},\cdots,z_{n},w) \in \mathbb{C}^{n+1}:\mathrm{Im}\,w= \sum_{i=1}^{n-1}|z_{i}|^{2}-|z_{n}|^2\}.$ Then $F=(0,\cdots,0,f,f,0)$ is a $C^2$ CR map from $M$ to $M',$ where $f$ is a $C^2$ CR function on $M$ which is not smooth on any nonempty open subset of $M.$ Note that $F$ is not transversal at any point on $M,$ and is not smooth on any nonempty open subset of $M.$
\end{example}

In order to make the preceding two examples meaningful, we will next show the existence of a $C^k$ CR function on a strongly pseudoconvex hypersurface which is not smooth on any nonempty open subset.

\begin{theorem}\label{thm24}
Let $D\subset \mathbb C^n$ be a bounded domain with a smooth boundary $M$ which is strongly pseudoconvex. Let $k\geq 1$ be a positive integer. Then there exists a CR function $f$ on $M$ of class $C^k$ which is not $C^{\infty}$ on any nonempty open subset of $M$.
\end{theorem}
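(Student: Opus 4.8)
The plan is to obtain $f$ from a Baire category argument carried out in the Banach space
\[
\mathcal A \;=\; \{\, g \in C^k(M) : g \text{ is CR} \,\},
\]
which is a closed subspace of $C^k(M)$ (the CR condition $\bar L g=0$ for the finitely many local generators $\bar L$ of $\overline{\mathcal V}$ passes to $C^k$-limits when $k\ge 1$) and is therefore complete. The two ingredients I need are: first, for every nonempty open $V\subset M$ a function in $\mathcal A$ that is \emph{not} of class $C^{k+1}$ on $V$; and second, the trivial observation that a function which is $C^\infty$ on some open set is in particular $C^{k+1}$ on any compactly contained subset. Granting these, the set of $g\in\mathcal A$ that are smooth on some nonempty open set will be meager, so its complement is a dense $G_\delta$, and any $f$ in it has the required property.

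For the building blocks I would use the Henkin--Ram\'irez holomorphic support function. For each $\zeta\in M$, strong pseudoconvexity provides $\Phi(\cdot,\zeta)$, holomorphic in $z\in D$, smooth up to $\overline D$, vanishing on $\overline D$ only at $\zeta$, and satisfying a lower bound of the form $\mathrm{Re}\,\Phi(z,\zeta)\gtrsim -\rho(z)+|z-\zeta|^2$ (with $\rho$ a defining function) that keeps $\Phi(\cdot,\zeta)$ in a half-plane; hence for non-integer $s>0$ the branch $\Phi(\cdot,\zeta)^s$ is a well-defined holomorphic function on $D$, of class $C^k$ on $\overline D$ once $s>k$. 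Setting $s=k+\tfrac12$, the boundary trace $\phi_\zeta:=\Phi(\cdot,\zeta)^s|_M$ is then a $C^k$ CR function: away from $\zeta$ it is smooth because $\Phi(\cdot,\zeta)$ is smooth and nonvanishing, while in local coordinates in which $\Phi(z,\zeta)=w+O(|z|^2)$ and $M=\{\mathrm{Re}\,w=|z'|^2+\cdots\}$ one computes, along the transverse tangential direction $t=\mathrm{Im}\,w$, that $\phi_\zeta\sim (it)^{s}$, which is $C^{\lfloor s\rfloor}=C^k$ but not $C^{k+1}$ at $\zeta$. Thus $\phi_\zeta\in\mathcal A$ while $\|\phi_\zeta\|_{C^{k+1}(\overline V)}=\infty$ whenever $\zeta\in V$. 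Establishing this precise boundary regularity --- that $\Phi^{k+1/2}$ traces to exactly $C^k$ and no better --- is the technical heart of the argument and the place where strong pseudoconvexity (hence the ball-model behaviour of $\Phi$ near $\zeta$) is essential.

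With the building blocks in hand the category argument is routine. Fix a countable basis $\{V_m\}$ of nonempty open subsets of $M$ with each $\overline{V_m}$ compact, and for $m,R\in\mathbb N$ set
\[
F_{m,R}\;=\;\{\, g\in\mathcal A : \|g\|_{C^{k+1}(\overline{V_m})}\le R \,\}.
\]
Each $F_{m,R}$ is closed in $\mathcal A$: if $g_i\to g$ in $C^k$ with $\|g_i\|_{C^{k+1}(\overline{V_m})}\le R$, the $k$-th derivatives are equi-Lipschitz, so by Arzel\`a--Ascoli $g\in C^{k+1}(\overline{V_m})$ with norm $\le R$. Each $F_{m,R}$ has empty interior: given $g\in F_{m,R}$ and $\e>0$, pick $\zeta\in V_m$ and note that $g+\d\,\phi_\zeta\in\mathcal A$ satisfies $\|\d\,\phi_\zeta\|_{C^k}=|\d|\,\|\phi_\zeta\|_{C^k}<\e$ for small $\d\neq0$, whereas $\|g+\d\,\phi_\zeta\|_{C^{k+1}(\overline{V_m})}=\infty$ because $g\in C^{k+1}(\overline{V_m})$ but $\phi_\zeta\notin C^{k+1}(\overline{V_m})$. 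Hence each $F_{m,R}$ is nowhere dense. If $g$ were $C^\infty$ on some nonempty open $U$, then $g\in C^{k+1}(\overline{V_m})$ for some $V_m\Subset U$, so $g\in\bigcup_{m,R}F_{m,R}$; by Baire this union is meager, and any $f$ in its (nonempty) complement is a $C^k$ CR function that is smooth on no nonempty open subset of $M$.

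I would remark, finally, that one could instead attempt the direct construction $f=\sum_j a_j\phi_{\zeta_j}$ over a dense sequence $\{\zeta_j\}$ with $\sum_j a_j\|\phi_{\zeta_j}\|_{C^k}<\infty$; the difficulty there is that the singular points are dense, so near any $\zeta_j$ infinitely many other singularities accumulate and one must rule out their conspiring to restore smoothness. The Baire argument is attractive precisely because it sidesteps this interaction entirely, requiring only the existence of \emph{one} badly behaved CR function localized in each $V_m$.
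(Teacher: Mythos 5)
Your route is genuinely different from the paper's and, after one repair discussed below, it works. The paper proves the theorem by an explicit series: it fixes a dense sequence $\{p_i\}$ in $M$, builds for each $i$ a CR function of the form $(1-g)^{m+1/2}$ from a smooth peak function $g$ at $p_i$ (with Hopf's lemma supplying a tangential vector field along which the peak function has nonvanishing derivative), so that $f_i\in C^{k+i}(M)\cap C^\infty(M\setminus\{p_i\})$ fails to be $C^{k+i+1}$ at $p_i$, and then sets $f=\sum_i a_if_i$ with the $a_i$ chosen so small that the tail $\sum_{i>j}a_if_i$ converges in $C^{k+j+1}$ on a small neighborhood $\Omega_j$ of $p_j$ avoiding the earlier singular points. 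This increasing-regularity trick is exactly what defuses the ``conspiring singularities'' difficulty you raise in your closing remark: near $p_j$ every summand except $f_j$ is $C^{k+j+1}$, so the singularity of $f_j$ survives in the sum, and density of $\{p_i\}$ finishes the proof. Your Baire-category argument sidesteps this bookkeeping entirely and yields the stronger conclusion that nowhere-smooth $C^k$ CR functions are generic (a dense $G_\delta$) in the closed subspace $\mathcal A\subset C^k(M)$; the price is a non-explicit output, and you still need essentially the same analytic building block --- your $\Phi(\cdot,\zeta)^{k+1/2}$ is the same mechanism as the paper's $(1-g)^{m+1/2}$, with the caveat that a Henkin--Ram\'irez support function is a priori nonvanishing only \emph{near} $\zeta$, so the globalization you assert (nonvanishing on all of $\overline D\setminus\{\zeta\}$) requires the standard passage to a genuine peak function, which is how the paper phrases it from the start.

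The one genuine flaw is that $F_{m,R}$ as you define it is not closed in $\mathcal A$, and your Arzel\`a--Ascoli step proves less than claimed: a uniform bound $\|g_i\|_{C^{k+1}(\overline{V_m})}\le R$ together with $C^k$ convergence gives only that the $k$-th derivatives of the limit are Lipschitz with constant $R$, i.e.\ $g\in C^{k,1}(\overline{V_m})$, not $g\in C^{k+1}(\overline{V_m})$. Already for $k=0$ the functions $g_i(x)=\sqrt{x^2+1/i}$ have uniformly bounded $C^1$ norms and converge uniformly to $|x|$, which is not $C^1$; so the closed $C^{k+1}$-ball condition is not preserved in the limit. The fix is immediate: define $F_{m,R}$ by the bound in the Lipschitz norm $C^{k,1}(\overline{V_m})$ instead. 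These sets \emph{are} closed under $C^k$ convergence, since pointwise Lipschitz estimates pass to uniform limits; any $g$ that is $C^\infty$ (indeed $C^{k+1}$) on an open set $U$ lies in some $F_{m,R}$ with $\overline{V_m}\Subset U$; and your building block still escapes every $F_{m,R}$ with $\zeta\in V_m$, because the $k$-th derivatives of $\Phi(\cdot,\zeta)^{k+1/2}$ behave like $\Phi^{1/2}$, hence are H\"older of exponent exactly $\tfrac12$ and not Lipschitz at $\zeta$. With this substitution the remainder of your argument --- empty interior of $F_{m,R}$ via the perturbation $g+\delta\phi_\zeta$, meagerness of the union, and Baire's theorem in the complete space $\mathcal A$ --- is correct as written.
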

\begin{proof}
First fix $p\in M$ and let $g\in C^{\infty}(\overline D)$ that is holomorphic on $D$ and peaks at $p$, say, $|g(z)|<g(p)=1$ for $z\in \overline D \setminus {p}$. By Hopf's Lemma, the normal derivative of $g$ at $p$ is nonzero and hence there is a smooth vector field $X$ tangent to $M$ near $p$ such that $Xg(p)\neq 0$. It follows that for any positive integer $m$, with a choice of a branch of logarithm, the function $g_m(z)=(1-g(z))^{m+\frac{1}{2}}$ is a CR function of class $C^m$ on $M$ but which is not of class $C^{m+1}$ at $p$. Let $\{p_{i}\}_{i=0}^{\infty} \subset M$ be a  dense subset of $M.$ We choose a sequence of $C^k$ CR functions $\{f_{i}\}_{i=0}^{\infty}$ on $M$ with the following properties: For each $i \geq 0, f_{i} \in C^{k+i}(M) \cap C^{\infty}(M \setminus \{p_{i}\}),$ and $f_{i}$ is not $C^{k+i+1}$ at $p_{i}.$ Then there exists a sequence of positive numbers $\{b_{i}\}_{i=0}^{\infty}$ such that, for any sequence of complex numbers $\{c_{i}\}_{i=0}^{\infty}$ with $|c_{i}| \leq b_{i}, i \geq 0, \sum_{i=0}^{\infty}c_{i}f_{i}$ converges uniformly to a $C^k$ CR function on $M.$

We fix a local chart $(U_{i},x)$ for each $p_{i},i\geq 0$ on $M,$ where $U_{i}$ is a neighborhood of $p_{i}.$ Choose $\Omega_{i} \subset\subset U_{i}, i\geq 0$ to be a  sufficiently small neighborhood of $p_{i}$ with the following properties:

\bigskip

(1).  For each $i \geq 1, p_{0},\cdots,p_{i-1} \not\in \Omega_{i}.$

(2). There exists a sequence of positive numbers $\{M_{i}^{j}\}_{i> j}$ such that for any $j \geq 0,$
$|D^{\alpha}f_{i}(x)| \leq M_{i}^{j}$ for all $|\alpha| \leq k+j+1, i>j,$ and for all $x \in \overline{\Omega_{j}}.$
Here $\alpha$ is a multiindex, and $D^{\alpha}$ denotes derivatives with respect to all real variables. The existence of such $\{M_{i}^{j}\}_{i > j}$ is ensured by the fact that
 $f_{i}$ is $C^{k+j+1}-$smooth for all $i>j.$

\bigskip

Next choose a sequence of positive numbers $\{a_{i}\}_{i=0}^{\infty}$ as follows:
$a_{0} < b_{0},$ and
$$a_{i}< \mathrm{min}\{b_{i},\frac{1}{2^i M_{i}^{0}},\cdots, \frac{1}{2^i M_{i}^{i-1}}\},~~ \text{for}~~
i \geq 1.$$
Let $f=\sum_{i=0}^{\infty}a_{i}f_{i}.$ Then $f$ is a $C^k$ CR function on $M.$ Moreover, from the choice of
$a_{i},i \geq 1,$ one can see that $\sum_{i=1}^{\infty} a_{i} D^{\alpha}f_{i}$ converges uniformly in $\Omega_{0},$
for any $|\alpha| \leq k+1.$ Consequently, $\sum_{i=1}^{\infty} a_{i} f_{i}$ converges to a $C^{k+1}$ function in $\Omega_{0}.$ Thus $f$ is not $C^{k+1}$ at $p_{0}$ since $f_{0}$ is not.  Similarly, one can check that $f$ is not $C^{k+i+1}$ at $p_{i},$ for all $i \geq 0.$ Hence, by the density of the sequence $\{p_{i}\}_{i=0}^{\infty}$, $f$ is a $C^k$ CR function which is not smooth on any nonempty open subset of $M$.
\end{proof}

\begin{remark}
As a consequence of Theorem \ref{thm24}, we see that for any $k \geq 1,$ there exists a CR function $f$ of class $C^{k}$  on the hypersurface $M=\{(z_{1},\cdots,z_{n},w) \in \mathbb{C}^{n+1}: \mathrm{Im}\,w=\sum_{i=1}^{n}|z_{i}|^{2}\}(n \geq 1)$  which is not smooth on any nonempty open subset, since $M$ is biholomorphically
equivalent to the unit sphere $\partial B^{n+1}=\{(z_{1},\cdots,z_{n})\in \mathbb{C}^{n+1}: |z_{1}|^{2}+\cdots+|z_{n+1}|^2=1 \}$  minus the point $(0,\dots,0,1).$
\end{remark}

We will need the following concept of $k_0-$nondegeneracy introduced in [La1]:

\begin{Definition}
Let $\widetilde{M}, \widetilde{M}'$ be CR manifolds, $\widetilde{M}'\subset \mathbb C^{N'}$ and $H:\widetilde M\to \widetilde{M}'$ a $C^{k_0}$ CR mapping, $p_{0} \in \widetilde{M}.$ Let $\rho=(\rho_{1},\cdots,\rho_{d'})$ be local defining functions for $\widetilde{M}'$ near $H(p_{0}),$ and choose a basis $L_{1},\cdots,L_{n}$ of CR vector fields for $\widetilde{M}$ near $p_{0}.$ If $\alpha=(\alpha_{1},\cdots,\alpha_{n})$ is a multiindex, write $L^{\alpha}=L_{1}^{\alpha_{1}} \cdots L_{n}^{\alpha_{n}}.$ Define the increasing sequence of subspaces $E_{i}(p_{0}) (0 \leq i \leq k_0)$ of $\mathbb{C}^{N'}$ by

$$E_{i}(p_{0})=\mathrm{Span}_{\mathbb{C}}\{L^{\alpha} \rho_{\mu,Z'}(H(Z),\overline{H(Z)})|_{Z=p_{0}}: 0 \leq |\alpha| \leq i, 1 \leq \mu \leq d'\}.$$
Here $\rho_{\mu,Z'}=(\frac{\partial \rho_{\mu}}{\partial z'_{1}},\cdots,\frac{\partial \rho_{\mu}}{\partial z'_{N'}}),$ and $Z'=(z'_{1},\cdots,z'_{N'})$ are the coordinates in $\mathbb{C}^{N'}.$ We say that $H$ is $k_{0}-$nondegenerate at $p_{0}\,\,(1 \leq k_{0} \leq k)$ if
$$E_{k_{0}-1}(p_{0}) \neq E_{k_{0}}(p_{0}) = \mathbb{C}^{N'}.$$

\end{Definition}

The dimension of $E_{i}(p)$ over $\mathbb{C}$ will be called the $i^{\mathrm{th}}$ geometric rank of $F$ at $p$ and it will be denoted by $\mathrm{rank}_{i}(F,p).$

For the invariance of the definition under the choice of the defining functions $\rho_{l},$ the basis of CR vector fields and the choice of holomorphic coordinates in $\mathbb{C}^{N'},$ the reader is referred to [La2]. It is easy to see that $\mathrm{rank}_{i}(F,p) \leq \mathrm{rank}_{i+1}(F,p),$ for any $i \geq 0, p \in M.$ We will see in Section 3 that under the hypothesis of Theorem \ref{thm1}, $\mathrm{rank}_{1}(F,p)= n+1$ and so $ \mathrm{rank}_{i}(F,p) \geq n+1,$ for any $p \in M, i \geq 1.$

\section{Proof of Theorem \ref{thm1}}
Let $M,M',F$ be as in Theorem \ref{thm1}. We work near a point $p\in M$ which we fix. If the Levi form of $M$ at $p$ has a positive and a negative eigenvalue, then the smoothness of $F$ follows from Theorem $2.9$ in [BX] and so we may assume that $M$ is strongly pseudoconvex at $p$. Let $\mathcal V$ denote the CR bundle of $M$. By Theorem IV.1.3 in [T], there is an integrable CR structure on $M$ near $p$ with CR bundle $\widehat {\mathcal V}$ that agrees with $\mathcal V$ to infinite order at $p$. In particular,  $(M, \widehat {\mathcal V})$ is strongly pseudoconvex at $p$ and hence we can find local coordinates $x_1,y_1,\dots, x_{n},y_{n}$ and $s$ vanishing at $p$ and first integrals $Z_j=x_j+\sqrt{-1}y_j=z_j,\,1\leq j\leq n$, $Z_{n+1}=s+\sqrt{-1}\psi(z,\overline{ z},s)$ where $ z=(z_1,\dots,z_{n})$ and $\psi$ is a real-valued smooth function satisfying
\[
\psi(z,\overline{ z},s)=| z|^2+O(s^2) + O(|z|^3).
\]
In these coordinates, near the origin, the bundle $\mathcal V$ has a basis of the form
$$
L_j=\frac{\partial}{\partial \overline{z_j}}+A_j( z,\overline{ z},s)\frac{\partial}{\partial s}+\sum_{k=1}^{n}B_{jk}(z,\overline{z},s)\frac{\partial}{\partial z_k}\,\,1\leq j\leq n
$$
where each
$$
A_j(z,\overline{z},s)=\frac{-\sqrt{-1}\psi_{\overline{z_j}}(z,\overline{ z},s)}{1+\sqrt{-1}\,\psi_s(z,\overline{ z},s)}\,\,\text{to infinite order at}\,0
$$
and the $B_{jk}$ vanish to infinite order at $0$. We may assume $0\in M'$, $F(0)=0$ and that we have coordinates $Z'=(z_1',\dots,z_{N+1}')$ in $\mathbb C^{N+1}$ so that near $0$, $M'$ is defined by
\begin{equation}
-\frac{z'_{N+1}-\overline{z'_{N+1}}}{2 \sqrt{-1}} + \sum_{j=1}^{l}|z'_{j}|^2  - \sum_{i=l+1}^{N} |z'_{j}|^2 +\phi^{*}(Z',\overline{Z'})=0
\end{equation}
where  $\phi^{*}(Z',\overline{Z'})=O(|Z'|^3)$ is a  real-valued smooth function.

In the following, for two $m$-tuples $x=(x_{1}.\cdots,x_{m}), y =(y_{1},\cdots,y_{m})$ of complex numbers, we write
$\langle x,y \rangle_{l}= \sum_{j=1}^m \delta_{j,l}x_{j}y_{j}$ and $|x|_{l}^2=\langle x, \overline{x} \rangle_{l}=\sum_{j=1}^m \delta_{j,l}|x_{j}|^2,$
where we denote by  $\delta_{j,l}$ the symbol which takes value $1$ when $1 \leq j \leq l$ and $-1$ otherwise. Let $ \widetilde{z'}=(z'_{1},\cdots,z'_{N}).$ Then $M'$ is locally defined by

$$\rho(Z',\overline{Z'})=-\frac{z'_{N+1}-\overline{z'}_{N+1}}{2 \sqrt{-1}}+|\widetilde{z'}|_{l}^2+ \phi^{*}(Z',\overline{Z'})=0.$$ If we write $F=(F_1,\dots, F_{N+1})=(\widetilde{F},F_{N+1}),$ then $F$ satisfies:
\begin{equation}\label{eqn1}
-\frac{F_{N+1}-\overline{F}_{N+1}}{2\sqrt{-1}}+|\widetilde{F}|_{l}^2+\phi^{*}(F,\overline{F})=0.
\end{equation}
Let $\mathcal V'$ denote the CR bundle of $M'$. Since $F$ is CR-transversal, and the fibers $\mathcal V_0$ and $\mathcal V'_0$ are spanned by $\frac{\partial}{\partial \overline{z_j}}, 1\leq j\leq n$ and $\frac{\partial}{\partial \overline{z'_k}}, 1\leq k\leq N$, we get $\lambda:= \frac{\partial F_{N+1}}{\partial s}(0) \neq 0.$ Moreover, equation $(3.2)$ shows that the imaginary part of $F_{N+1}$ vanishes to second order at the origin, and so the number $\lambda$ is real. We claim that we can assume that $\lambda > 0.$ Indeed, when $\lambda < 0,$ by considering $\widetilde{M'}$ defined by $\rho(\tau(Z),\overline{\tau(Z)})$ instead of $M',$ and considering $\widetilde{F}=\tau \circ F$ instead of $F,$ we get $\lambda >0.$ Here $\tau$ is the change of coordinates in $\mathbb{C}^{N+1}: \tau(z_{1},\cdots,z_{N},w)=(z_{1},\cdots,z_{N},-w).$
 By applying   $L_j$, $L_jL_k,\overline{L_{j}}\overline{L_{k}}$ to equation $(3.2)$, and evaluating at $0$, we get

$$\frac{\partial F_{N+1}}{\partial z_{i}}(0)=0, \,\,1 \leq i \leq n,$$
and
$$\frac{\partial F_{N+1}}{\partial z_{k} \partial z_{j}}(0)=\frac{\partial F_{N+1}}{\partial \overline{z_{k}} \partial \overline{z_{j}}}=0, \,\,\,1 \leq k ,j \leq n.$$ We next apply $\overline{L_j}L_k$ to $F_{N+1}$ and evaluate at $0$ to get
$$
\frac{\partial F_{N+1}}{\partial \overline{z_{j}} \partial {z_{k}}}(0)=\sqrt{-1}\delta_{jk}\lambda,
$$
where $\delta_{jk}$ is the Kronecker delta.
Hence we are able to write,
\begin{equation}
F_{N+1}(z,\overline{z},s)=\lambda s+\sqrt{-1}\lambda |z|^2+O(|z||s|+s^2)+o(|z|^2),
\end{equation}
For $1\leq j\leq N$, using $L_kF_j(0)=0$, we have:
\begin{equation}
F_{j}=b_{j}s+\sum_{i=1}^{n}a_{ij}z_{i} +O(|z|^2+s^2).
\end{equation}
for some $b_{j} \in \mathbb{C},a_{ij} \in \mathbb{C}, 1 \leq i \leq n, 1 \leq j \leq N,$
or equivalently,

\begin{equation}
(F_{1},\cdots,F_{N})=s(b_{1},\cdots,b_{N})+(z_{1},\cdots,z_{n})A+(\hat{F}_{1},\cdots,\hat{F}_{N})
\end{equation}
where $A=(a_{ij})_{n \times N}$ is an $n \times N$ matrix, and $\hat{F}_{j}=O(|z|^2+s^2),1 \leq j \leq N.$
Plugging $(3.3)$ and $(3.4)$ into equation $(3.2)$, we get
$$
\lambda |z|^2+O(|z||s|+s^2)+o(|z|^2)=\langle zA , \overline{z}\overline{A} \rangle_{l} +O(|z||s|+s^2)+o(|z|^2).
$$
When $s=0$ the latter equation leads to
$$
\lambda |z|^2+o(|z|^2)= \langle zA , \overline{z}\overline{A} \rangle_{l} + o(|z|^2).
$$
It follows that
\begin{equation}\label{eqn2}
\lambda I_{n}=A E(l,N)A^{*},
\end{equation}
where $A^{*}=\overline{A^{t}}$.
Here $I_{n}$ denotes the $n$ by $n$ identity matrix and $E(k,m)$ denotes the $m \times m$ diagonal matrix with its first $k$ diagonal elements $1$ and the rest $-1.$ Note from equation (\ref{eqn2}) that the matrix $A$ has rank $n.$ Moreover, since $\lambda >0,$ we get $l \geq n$ from equation (\ref{eqn2}) using elementary linear algebra. Since $l \leq n$, it follows that $l=n$. Thus $M'$ is locally defined by
\begin{equation}\label{eqn04}
\rho(Z',\overline{Z'})=-\frac{z'_{N+1}-\overline{z'}_{N+1}}{2 \sqrt{-1}}+|\widetilde{z'}|_{n}^2+ \phi^{*}(Z',\overline{Z'})=0,
\end{equation}
and we have
\begin{equation}\label{eqn041}
\lambda I_{n}=A E(n,N)A^{*}.
\end{equation}

A direct computation shows that
$$L_{i}\overline{F}_{j}(0)=\overline{a}_{ij},\,\, 1 \leq i \leq n, 1 \leq j \leq N.$$
Since $A=(a_{ij})_{1 \leq i \leq n, 1 \leq j \leq N}$
is of rank $n,$ we conclude that $dF: T_{0}^{(0,1)}M \rightarrow T_{0}^{(0,1)}M'$ is injective.
Now let us introduce some notations. Set
$$a_{j}(Z,\overline{Z})=\rho_{z'_{j}}(F(Z),\overline{F(Z)})=\delta_{j,n}\overline{F}_{j}+\phi_{z'_{j}}^{*}(F,\overline{F}),~~1 \leq j \leq N,$$
$$a_{N+1}(Z,\overline{Z})=\rho_{z'_{N+1}}(F(Z),\overline{F(Z)})=\frac{\sqrt{-1}}{2}+\phi_{z'_{N+1}}^{*}(F,\overline{F}).$$
and $${\bf a}=(a_{1},\cdots,a_{N+1}).$$
We have:
$$L^{\alpha}\rho_{Z'}(F,\overline{F})=L^{\alpha}{\bf{a}}=(L^{\alpha}a_{1},\cdots,L^{\alpha}a_{N}
,L^{\alpha}a_{N+1}),$$
for any multiindex $ 0 \leq  |\alpha| \leq N-n+1.$ Recall that for any $0 \leq i \leq N-n+1,$

$$\mathrm{rank}_{i}(F,p)=\mathrm{dim}_{\mathbb{C}}(\mathrm{Span}_{\mathbb{C}}\{L^{\alpha}{\bf{a}}(Z,\overline{Z})|_{p}:0 \leq |\alpha| \leq i\}).$$

From the injectivity of $dF$ and we get
\begin{lemma}
Let $M,M',F$ be as in Theorem \ref{thm1}. Then for any  $ p \in M, \mathrm{rank}_{0}(F,p)=1, \mathrm{rank}_{1}(F,p)=n+1.$ Consequently, $\mathrm{rank}_{i}(F,p) \geq n+1,$ for any $i \geq 1.$
\end{lemma}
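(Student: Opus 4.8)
The plan is to compute the entire spanning set at the normalizing origin. Since the geometric ranks $\mathrm{rank}_i(F,p)$ are independent of the choice of holomorphic coordinates, defining function, and CR frame, and since the normalization carried out at the start of Section 3 may be performed centered at any point of $M$, it suffices to evaluate $\{L^{\alpha}{\bf a}\}$ at $p=0$ in those coordinates and read off the answer; the same computation then applies verbatim at every other point. Recall that $a_j=\delta_{j,n}\overline{F}_j+\phi^{*}_{z'_j}(F,\overline{F})$ for $1\le j\le N$, and $a_{N+1}=\frac{\sqrt{-1}}{2}+\phi^{*}_{z'_{N+1}}(F,\overline{F})$.

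First I would handle $\mathrm{rank}_0$. Using $F(0)=0$ together with $\phi^{*}=O(|Z'|^3)$, every first-order derivative $\phi^{*}_{z'_j}$ vanishes at the origin; hence $a_j(0)=\delta_{j,n}\overline{F}_j(0)=0$ for $1\le j\le N$, while $a_{N+1}(0)=\frac{\sqrt{-1}}{2}$. Thus ${\bf a}(0)=(0,\dots,0,\frac{\sqrt{-1}}{2})\neq 0$, which gives $\mathrm{rank}_0(F,0)=1$.

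Next I would compute the vectors $L_i{\bf a}(0)$ for $1\le i\le n$. The key input is the identity $L_i\overline{F}_j(0)=\overline{a}_{ij}$ established just before the lemma. Applying $L_i$ to $a_j$ via the chain rule, the contribution of the $\phi^{*}$ term is a combination of the second-order derivatives $\phi^{*}_{z'_j z'_k}$ and $\phi^{*}_{z'_j\overline{z'_k}}$ evaluated at $F(0)=0$, and these all vanish because $\phi^{*}=O(|Z'|^3)$; the same reasoning kills $L_i a_{N+1}(0)$. Therefore $L_i{\bf a}(0)=(\delta_{1,n}\overline{a}_{i1},\dots,\delta_{N,n}\overline{a}_{iN},0)$, i.e. its first $N$ entries form the $i$-th row of $\overline{A}\,E(n,N)$ and its last entry is $0$. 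I expect this bookkeeping — verifying that every $\phi^{*}$-contribution drops out at the origin — to be the only delicate point; everything after it is linear algebra.

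Finally I would argue linear independence of ${\bf a}(0),L_1{\bf a}(0),\dots,L_n{\bf a}(0)$. Only ${\bf a}(0)$ has a nonzero $(N+1)$-st entry, so it is independent from the rest; and $L_1{\bf a}(0),\dots,L_n{\bf a}(0)$ are independent because their leading $N$ entries are the rows of $\overline{A}\,E(n,N)$, where $A$ has rank $n$ by \eqref{eqn041} and $E(n,N)$ is invertible. Hence these $n+1$ vectors are linearly independent and $\mathrm{rank}_1(F,0)=n+1$. The concluding assertion then follows from the monotonicity $\mathrm{rank}_i(F,p)\le \mathrm{rank}_{i+1}(F,p)$ noted after the definition, giving $\mathrm{rank}_i(F,p)\ge \mathrm{rank}_1(F,p)=n+1$ for all $i\ge 1$.
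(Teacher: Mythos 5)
Your proposal is correct and follows essentially the same route as the paper: the lemma there is exactly the computation you carry out, namely ${\bf a}|_{0}=(0,\dots,0,\frac{\sqrt{-1}}{2})$ and $L_{i}a_{j}(0)=\delta_{j,n}\overline{a}_{ij}$ (all $\phi^{*}$-contributions vanishing at the origin since $\phi^{*}=O(|Z'|^3)$ and $F(0)=0$), so that the first $N$ entries of the vectors $L_{i}{\bf a}|_{0}$ form the rank-$n$ matrix $\overline{A}E(n,N)$ by equation (\ref{eqn041}) --- which is precisely what the paper's phrase ``injectivity of $dF$'' encodes --- and the displayed $(n+1)\times(N+1)$ matrix in the proof of Lemma \ref{lemma1} is the same block form you derive. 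Your appeal to the invariance of $\mathrm{rank}_{i}$ under the choices involved and to the monotonicity $\mathrm{rank}_{i}(F,p)\leq\mathrm{rank}_{i+1}(F,p)$ matches the paper's treatment, so nothing needs to be added.
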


We next prove a normalization lemma which will be used later.
\begin{lemma} \label{lemma1}
Let $M, M', F$ be as in Theorem \ref{thm1}. Assume $rank_{l}(F,p)=m+1,$ for some $l>1, m \geq n.$ Then there exist multiindices $\{ \beta_{n+1},\cdots,\beta_{m}\}$ with $1 < |\beta_{i}| \leq l$ for all $i,$ such that after a linear biholomorphic change of coordinates in $\mathbb{C}^{N+1}: \widetilde{Z}=(\widetilde{z}_{1},\cdots,\widetilde{z}_{N},\widetilde{z}_{N+1})
=((z'_{1},\cdots,z'_{N})V, z'_{N+1}),$ where $\widetilde{Z}$ denotes the new coordinates in $\mathbb{C}^{N+1},$ and $V$ is an $N \times N$ matrix satisfying $V E(n,N) V^{*}=E(n,N),$ the following hold:
\begin{equation} \label{eqn05}
\widetilde{\bf{a}}|_{p}=(0,\cdots,0,\frac{\sqrt{-1}}{2}),
\left(\begin{array}{c}
  L_{1}\widetilde{\bf{a}}|_{p} \\
  \cdots \\
  L_{n}\widetilde{\bf{a}}|_{p}
  \end{array}\right)=\left(\begin{array}{ccc}
                     \sqrt{\lambda} {\bf{I}}_{n} & {\bf{0}}_{n \times (N-n)} & {{\bf{0}}_{n}^t}
                   \end{array}\right),
\end{equation}

\begin{equation}\label{eqn06}
\left(\begin{array}{c}
  L^{\beta_{n+1}}\widetilde{\bf{a}}|_{p} \\
  \cdots \\
  L^{\beta_{m}}\widetilde{\bf{a}}|_{p}
  \end{array}\right)=\left(\begin{array}{cccc}
                             {\bf C} & {\bf M}_{m-n} & {\bf 0}_{(m-n) \times (N-m)} & {\bf d}
                           \end{array}
  \right).
\end{equation}

Here we write $\widetilde{\bf{a}}=\widetilde{\rho}_{\widetilde{Z}}(\widetilde{Z}(F),\overline{\widetilde{Z}(F)}),$ and $\widetilde{\rho}$ is a local defining function of $M'$ near $0$ in the new coordinates. Moreover,  ${\bf{I}}_{n}$ is the $n \times n$ identity matrix, ${\bf{0}}_{n \times (N-n)}$ is an $n \times(N-n)$ zero matrix, and ${\bf{0}}_{n}^t$ is an $n-$dimensional zero column vector. ${\bf C}$ is an $(m-n) \times n$ matrix, ${\bf M}_{m-n}$ is an $(m-n) \times (m-n)$ invertible matrix, ${\bf 0}_{(m-n) \times (N-m)}$ is an $(m-n) \times (N-m)$ zero matrix, and ${\bf d}$ is an $(m-n)-$dimensional column vector.
\end{lemma}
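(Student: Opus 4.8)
The plan is to first read off the zeroth- and first-order data of $\mathbf a$ at $p=0$ explicitly, and then to exploit the freedom to act by the isometry group of the Hermitian form $E(n,N)$ to move everything into normal form. Since $F(0)=0$ and $\phi^*=O(|Z'|^3)$, evaluating at $p$ gives $a_j|_p=0$ for $1\le j\le N$ and $a_{N+1}|_p=\frac{\sqrt{-1}}{2}$, so the first identity in (\ref{eqn05}) holds automatically in any coordinates of the prescribed type (the last coordinate $z'_{N+1}$ is untouched by $V$). Applying $L_i$, using $L_i\overline F_j(0)=\overline a_{ij}$ together with the fact that $\phi^*_{z'_k}(F,\overline F)=O(|Z|^2)$ has vanishing first derivatives at $0$, I find $L_i a_k|_p=\delta_{k,n}\overline a_{ik}$ for $k\le N$ and $L_i a_{N+1}|_p=0$; that is, the $n\times(N+1)$ matrix whose $i$-th row is $L_i\mathbf a|_p$ equals $(\overline A\,E(n,N)\ \ \mathbf 0_n^t)$, where $A=(a_{ij})$ is the matrix of (\ref{eqn041}).

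Next I record the transformation rule. Writing the defining function in the new coordinates as $\widetilde\rho(\widetilde Z)=\rho(Z'(\widetilde Z))$, the chain rule gives $\widetilde{\mathbf a}_{1:N}=\mathbf a_{1:N}\,(V^t)^{-1}$, and since $V$ has constant entries this operation commutes with every $L^\alpha$. Because $\lambda I_n=A\,E(n,N)\,A^*$ with $\lambda>0$ and $l=n$, the rows of $A$, rescaled by $1/\sqrt\lambda$, form an $E(n,N)$-orthonormal \emph{positive} $n$-frame, and since the form has exactly $n$ positive directions this frame spans a maximal positive-definite subspace. By Witt's extension theorem it completes to a full $E(n,N)$-orthonormal basis of $\mathbb C^N$, and the associated change of basis is a matrix $V$ with $V E(n,N)V^*=E(n,N)$ and $AV=\sqrt\lambda(\mathbf I_n\ \ \mathbf 0_{n\times(N-n)})$. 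Unwinding the chain-rule transform, this $V$ is exactly what turns the first-order matrix into $\sqrt\lambda(\mathbf I_n\ \ \mathbf 0_{n\times(N-n)}\ \ \mathbf 0_n^t)$, giving the second identity in (\ref{eqn05}) and forcing $E_1(p)=\mathrm{Span}\{e_1,\dots,e_n,e_{N+1}\}$.

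For (\ref{eqn06}) I use the hypothesis $\mathrm{rank}_l(F,p)=m+1$. Since $\dim E_1(p)=n+1$ (the previous lemma), the classes of $\{L^\alpha\mathbf a|_p:2\le|\alpha|\le l\}$ span the $(m-n)$-dimensional quotient $E_l(p)/E_1(p)$, so I can select multiindices $\beta_{n+1},\dots,\beta_m$ (necessarily $1<|\beta_i|\le l$) whose classes form a basis. Because $E_1(p)=\mathrm{Span}\{e_1,\dots,e_n,e_{N+1}\}$, linear independence modulo $E_1(p)$ is equivalent to linear independence of the projections $v_{n+1},\dots,v_m$ of the vectors $L^{\beta_i}\mathbf a|_p$ onto the negative coordinates $z'_{n+1},\dots,z'_N$; thus the $v_i$ are $m-n\le N-n$ independent rows in $\mathbb C^{N-n}$. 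I then use the residual freedom compatible with (\ref{eqn05}): conjugating by $V=\mathrm{diag}(\mathbf I_n,U,1)$ with $U\in U(N-n)$ preserves $V E(n,N)V^*=E(n,N)$, acts as the identity on the first $n$ and the last columns (so (\ref{eqn05}) is untouched), and sends each $v_i$ to $v_i\overline U$. Choosing $U$ so that $\overline U$ carries the row span of $\{v_i\}$ onto the span of the first $m-n$ coordinate functionals makes columns $m+1,\dots,N$ of the $L^{\beta_i}\widetilde{\mathbf a}|_p$ vanish and leaves an invertible $(m-n)\times(m-n)$ block $\mathbf M_{m-n}$ in columns $n+1,\dots,m$, which is (\ref{eqn06}).

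The step I expect to be the crux is the first-order normalization in the second paragraph: it is precisely the facts $\lambda>0$ and $l=n$, established before the lemma, that make the frame coming from $A$ a \emph{maximal positive-definite} system, hence movable to standard position by an element of $U(n,N-n)$; without these, no isometry of $E(n,N)$ could carry it to $\sqrt\lambda(\mathbf I_n\ \ \mathbf 0)$, and the entire normalization would fail. The remaining care is purely bookkeeping—tracking the $(V^t)^{-1}$ factor through the relation $V E(n,N)V^*=E(n,N)$ and checking that the two normalizations are carried out by commuting block transformations so that the second does not disturb the first.
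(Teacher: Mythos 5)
Your proposal is correct and takes essentially the same route as the paper's proof: the same computation showing $\mathbf{a}|_{p}=(0,\cdots,0,\frac{\sqrt{-1}}{2})$ and $\bigl(L_{i}\mathbf{a}|_{p}\bigr)_{i}=\bigl(\overline{A}E(n,N)\ \ \mathbf{0}_{n}^{t}\bigr)$, the same extension of the $E(n,N)$-orthonormal positive $n$-frame determined by $A$ (using $\lambda>0$ and $l=n$) to a full isometry of the form --- the paper quotes the matrix-extension result on page 386 of [BHu], which is precisely the Witt-type extension you invoke --- and the same residual block-unitary rotation in the coordinates $z'_{n+1},\dots,z'_{N}$ to achieve (\ref{eqn06}) without disturbing (\ref{eqn05}). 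The only differences are notational bookkeeping (the paper writes the coordinate change as $\widetilde{Z}=Z'D^{-1}$ with $D$ built from $E(n,N)W$, where your $(V^{t})^{-1}$ acts on the gradient side), not mathematical substance.
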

\begin{proof}
Assume that $p=0.$ Note that
$L_{i}a_{j}(0)=\delta_{j,n} L_{i}\overline{F}_{j}(0)=\delta_{j,n} \overline{a}_{ij}.$ Thus we have,
$$\left(\begin{array}{c}
    {\bf a}|_{0} \\
    L_{1}{\bf a}|_{0} \\
    \cdots \\
    L_{n}{\bf a}|_{0}
  \end{array}\right)=\left(\begin{array}{cc}
                       {\bf 0}_{N-n} & \frac{\sqrt{-1}}{2} \\
                       \overline{A}E(n,N) & {{\bf 0}_{n}^t}
                     \end{array}\right),
$$
where $A=(a_{ij})_{1 \leq i \leq n, 1 \leq j \leq N}$ is as mentioned above, ${\bf 0}_{N-n}$ is an $(N-n)-$ dimensional zero row vector, ${\bf 0}_{n}^t$ is an $n-$dimensional zero column vector. Let $B=E(n,N) A^{t}.$ Then
by equation (\ref{eqn041}), we know that $\overline{A}B=\lambda I_{n},$ and $B^{*}E(n,N)B=\lambda I_{n}.$ By a result in [BHu] (see page 386 in [BHu] for more details on this), we can find an $N \times N$ matrix
$U$ whose first $n$ rows are  rows of $B^{*},$ such that, $U E(n,N) U^{*}=\lambda E(n,N).$ Consequently, $U^{*}E(n,N)U=\lambda E(n,N), W E(n,N)W^{*}=E(n,N),$ where $W=\frac{1}{\sqrt{\lambda}} U^{*}.$

We next make the following change of coordinates in $\mathbb{C}^{N+1}$:  $\widetilde{Z}=Z' D^{-1}$ where
$$D=\left(\begin{array}{cc}
            E(n,N)W & {\bf 0}_{N}^t \\
            {\bf 0}_{N} & 1
          \end{array}\right),
$$
and $0_{N}$ is $N-$dimensional zero row vector.
Then the function $\widetilde{\rho}(\widetilde{Z},\overline{\widetilde{Z}})=\rho(\widetilde{Z}D,\overline{\widetilde{Z}D})$
is a defining function for $M'$ near $0$ with respect to the new coordinates $\widetilde{Z}.$ By the chain rule,
$$\widetilde{\rho}_{\widetilde{Z}}(\widetilde{F}(Z),\overline{\widetilde{F}(Z)})=
\rho_{Z'}(F(Z),\overline{F(Z)})D,\quad \text{where}\,\, \widetilde{F}(Z)=F(Z)D^{-1}.$$
For any multiindex $\alpha,$
 $$L^{\alpha} \widetilde{\rho}_{\widetilde{Z}}(\widetilde{F}(Z),\overline{\widetilde{F}(Z)})=
L^{\alpha} \rho_{Z'}(F(Z),\overline{F(Z)})D.$$
In particular, at $p=0,$ we have,
$$\left(\begin{array}{c}
    {\bf \widetilde{a}}|_{0} \\
    L_{1}{\bf \widetilde{a}}|_{0} \\
    \cdots \\
    L_{n}{\bf \widetilde{a}}|_{0}
  \end{array}\right)=\left(\begin{array}{cc}
                       {\bf 0}_{N} & \frac{\sqrt{-1}}{2} \\
                       \overline{A}E(n,N) & {{\bf 0}_{n}^t}
                     \end{array}\right)D=\left(\begin{array}{cc}
                                           {\bf 0}_{N} & \frac{\sqrt{-1}}{2} \\
                                           \overline{A}W & {{\bf 0}_{n}^t}
                                         \end{array}\right), $$

where $\widetilde{\bf a}(Z,\overline Z)= \widetilde{\rho}_{\widetilde{Z}}(\widetilde{F}(Z),\overline{\widetilde{F}(Z)})$.  Since  $\overline{A}=B^{*}E(n,N)$,
$$\overline{A}W=\frac{1}{\sqrt{\lambda}}B^{*}E(n,N)U^{*}=\left(\begin{array}{cc}
                                       \sqrt{\lambda} I_{n} & {\bf 0}
                                     \end{array}\right).
$$

Thus equation (\ref{eqn05}) holds with respect to the new coordinates $\widetilde{Z}.$ In the following, we will still write $Z'$ instead of $\widetilde{Z}, {\bf a}$ instead of $\widetilde{\bf a}.$
Since $\{{\bf a},L_{1}{\bf a}, \cdots,L_{n}{\bf a}\}|_{0}$ is linearly independent, extend it to a basis of $E_{l}(0),$ which has dimension $m+1$ by assumption. That is, pick multiindices $\{ \beta_{n+1},\cdots,\beta_{m}\}$ with $1 < |\beta_{i}| \leq l$ for each $i,$ such that,
$$\{{\bf a}, L_{1}{\bf a}, \cdots, L_{n}{\bf a},L^{\beta_{n+1}}{\bf a},\cdots,L^{\beta_{m}}{\bf a}\}|_{0}$$
is linearly independent over $\mathbb{C}.$ Write $\hat{\bf a}=(a_{n+1},\cdots,a_{N}),$ i.e., the $(n+1)^{\mathrm{th}}$ to $N^{\mathrm{th}}$ components of ${\bf a}.$
Note that $\{{\bf a},L_{1}{\bf a},\cdots,L_{n}{\bf a}\}|_{0}$ is of the form (\ref{eqn05}). The set $\{L^{\beta_{n+1}}\hat{ \bf a}, \cdots, L^{\beta_{m}}\hat{\bf a}\}|_{0}$
is linearly independent in $\mathbb{C}^{N-n}.$ Let $S$ be the $(m-n)-$dimensional vector
space spanned by it and let $\{T_{1},\cdots,T_{m-n}\}$ be an orthonormal basis of $S.$
Extend it to an orthonormal basis $\{T_{1},\cdots,T_{m-n},T_{m-n+1},\cdots,T_{N-n}\}$ of $\mathbb{C}^{N-n}$ and set $T$ to be the following  $(N-n) \times (N-n)$ unitary matrix:
$$T=\left(\begin{array}{c}
      T_{1} \\
      \cdots \\
      T_{N-n}
    \end{array}\right)^{*}.
$$
We next make the following change of coordinates:
 $$\widetilde{Z}=(\widetilde{z}_{1}, \cdots, \widetilde{z}_{N},\widetilde{z}_{N+1})=(z'_{1},\cdots,z'_{n},(z'_{n+1},\cdots,z'_{N})T^{-1}, z'_{N+1}).$$
One can check that equation (\ref{eqn06}) holds in the new coordinates $\widetilde{Z}$.
\end{proof}

\begin{remark}
From the construction of $~V$ in the proof of Lemma \ref{lemma1}, one
can see that, in the new coordinates $\widetilde{Z}$, the following continues to hold: $M'$ is locally defined near $0$ by
$$\widetilde{\rho}(\widetilde{Z},\overline{\widetilde{Z}})=
-\frac{\widetilde{z}_{N+1}-\overline{\widetilde{z}_{N+1}}}{2\sqrt{-1}}+\sum_{i=1}^{n}|\widetilde{z}_{i}|^2
-\sum_{i=n+1}^{N}|\widetilde{z}_{i}|^2
+\widetilde{\phi^{*}}(\widetilde{Z},\overline{\widetilde{Z}})=0,$$
where $\widetilde{Z}=(\widetilde{z}_{1},\cdots,\widetilde{z}_{N},\widetilde{z}_{N+1}),$
and $\widetilde{\phi^{*}}(\widetilde{Z}, \overline{\widetilde{Z}})=O(|\widetilde{Z}|^3)$ is a real-valued smooth function near $0.$ In what follows, we will write the new coordinates as $Z'$ instead of $\widetilde{Z}$,  drop the tilde from $\widetilde {\rho}$ and set ${\bf{a}}(Z,\overline Z)=\rho_{Z'}(F(Z),\overline{F(Z)})$.
\end{remark}

\begin{remark}
In Lemma \ref{lemma1}, equations (\ref{eqn05}), (\ref{eqn06}) can be rewritten as follows:
\begin{equation} \label{eqnnorm}
\widetilde{\bf{a}}|_{0}=(0,\cdots,0,\frac{\sqrt{-1}}{2}),
\left(\begin{array}{c}
  L_{1}\bf{a}|_{0} \\
  \cdots \\
  L_{n}\bf{a}|_{0} \\
  L^{\beta_{n+1}}\bf{a}|_{0}  \\
  \cdots \\
  L^{\beta_{m}}\bf{a}|_{0}
\end{array}\right)=\left(\begin{array}{ccc}
                     {\bf{B}}_{m} & {\bf{0}} & {\bf{b}}
                   \end{array}\right),
\end{equation}
where ${\bf B}_{m}$ is an $m \times m$ invertible matrix, ${\bf 0}$ is an $m \times (N-m)$ zero matrix, ${\bf b}$ is an $m-$dimensional column vector.
 We note that Lemma \ref{lemma1} plays the same role as Lemma $4.2$ in [BX].
\end{remark}

The remaining argument will be essentially the same as in [BX]. But to make the paper more complete and self-contained, we  will still include a few details. First we need the following regularity theorem from [BX](Theorem 4.8, [BX]).
\begin{theorem}\label{thm4}
Let $M,M',F$ be as in Theorem \ref{thm1} (resp. as in Theorem \ref{thm2.1}). Let $p \in M$ and $O$ be a
neighborhood of $p$ in $M.$ Assume that for some $1
\leq l \leq N-n,$
$\mathrm{rank}_{l}(F,p)=n+l$, and $\mathrm{rank}_{l+1}(F,q) = n+l
~\text{for all}~q \in O.$  Then F is smooth (resp. real analytic) near $p.$
\end{theorem}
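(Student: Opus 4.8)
The plan is to turn the constant--rank hypothesis into a \emph{reflection identity} that closes the jet of $F$ at a finite order, and then to feed that identity into an elliptic regularity bootstrap. First I would upgrade the two rank hypotheses to a constant--rank statement on a full neighborhood: since $q\mapsto\mathrm{rank}_l(F,q)$ is lower semicontinuous, $\mathrm{rank}_l(F,q)\ge\mathrm{rank}_l(F,p)=n+l$ for $q$ near $p$, while $\mathrm{rank}_l(F,q)\le\mathrm{rank}_{l+1}(F,q)=n+l$ on $O$; hence $\mathrm{rank}_l(F,q)=\mathrm{rank}_{l+1}(F,q)=n+l$ near $p$, i.e.\ $E_l(q)=E_{l+1}(q)$ is a subbundle of $\mathbb C^{N+1}$ of constant rank $n+l$ depending smoothly (resp.\ real-analytically) on $q$. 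I then apply Lemma \ref{lemma1} with $m+1=n+l$ to normalize the coordinates in $\mathbb C^{N+1}$, so that the distinguished basis $\{\mathbf a,L_1\mathbf a,\dots,L_n\mathbf a,L^{\beta_{n+1}}\mathbf a,\dots,L^{\beta_m}\mathbf a\}$ of $E_l$ has the explicit block form (\ref{eqnnorm}); in particular the first--order block $L_i\mathbf a|_0$ is $\sqrt{\lambda}$ times the identity on its leading $n$ columns.

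Next, the equality $E_{l+1}=E_l$ says that every $L^\beta\mathbf a$ with $|\beta|=l+1$ is a linear combination of the basis vectors; by Cramer's rule the coefficients are smooth (resp.\ real-analytic) functions of the entries of $\{L^\alpha\mathbf a\}_{|\alpha|\le l}$, and since $L_jF=0$ these entries are in turn smooth functions of $Z,\overline Z$ and of the antiholomorphic derivatives $L^\gamma\overline F$, $|\gamma|\le l$. Reading off components through the nondegenerate leading block (and using the constancy of the rank to control the components $a_{m+1},\dots,a_N$ lying outside it), I obtain a family of reflection identities
\[
L^\beta\overline F \;=\; \Psi_\beta\big(Z,\overline Z,(L^\gamma\overline F)_{|\gamma|\le l}\big),\qquad |\beta|=l+1,
\]
with each $\Psi_\beta$ smooth (resp.\ real-analytic). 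This is the step that carries the geometric content of the finite nondegeneracy.

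I then augment $F$ to the jet $J=\big(F,(L^\gamma\overline F)_{|\gamma|\le l}\big)$ and argue that $J$ satisfies a \emph{complete} first--order system. The equations $L_kF=0$ are built in; $L_k$ applied to an entry $L^\gamma\overline F$ returns, modulo commutators lying in $\mathcal V$, either another entry of $J$ (when $|\gamma|<l$) or, by the reflection identities above, a smooth function of $(Z,\overline Z,J,\overline J)$ (when $|\gamma|=l$). Because $\overline L_k\overline F=\overline{L_kF}=0$, the derivatives $\overline L_kJ$ and the transversal derivative $TJ$ enter only through the Levi brackets $[L_j,\overline L_k]$; Levi nondegeneracy makes the associated linear system for $\overline L_kJ$ and $TJ$ invertible, so these too become smooth functions of $(Z,\overline Z,J,\overline J)$. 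As $\{L_k,\overline L_k,T\}$ spans $\mathbb C TM$, this yields a complete system $dJ=\Omega(Z,\overline Z,J,\overline J)$ with $\Omega$ smooth (resp.\ real-analytic). Since $l\le N-n$ and $F\in C^{N-n+1}$, the entry $L^\gamma\overline F$ of worst regularity is still $C^{1}$, so $J$ is $C^1$; a routine bootstrap on the complete system then promotes $J$, and hence $F$, to $C^\infty$ near $p$. In the real-analytic setting of Theorem \ref{thm2.1} the graph of $J$ is an integral manifold of a real-analytic involutive distribution, so $J$ and $F$ are real-analytic.

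The delicate part is the second step in the smooth category, where one cannot pass to a complexification: the reflection identity must be manufactured from genuine real derivatives using the approximate integrable structure $\widehat{\mathcal V}$ introduced at the start of Section 3 (it agrees with $\mathcal V$ to infinite order) together with Treves' approximation, and one must check that only derivatives of order $\le l\le N-n$ enter the Cramer/implicit-function data, so that the initial regularity $C^{N-n+1}$ is exactly enough to make the identities meaningful and to launch the bootstrap. Verifying that the commutator terms in the third step do not raise the differential order --- so that $\Omega$ genuinely depends on $J$ and not on its derivatives --- is the remaining point that requires care.
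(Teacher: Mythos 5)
Your opening moves coincide with the paper's: the semicontinuity argument giving $\mathrm{rank}_l(F,q)=\mathrm{rank}_{l+1}(F,q)=n+l$ near $p$, the normalization via Lemma \ref{lemma1} with $m=n+l-1$, and the use of Cramer's rule to express the dependent rows of the matrix $(L^\alpha{\bf a})_{|\alpha|\le l+1}$ through the basis. But from that point on you diverge, and the divergence contains a genuine gap. The paper uses the constant rank to produce \emph{CR} coefficient functions $G_i^j$ of class $C^{N+1-n-l}$ satisfying $(3.13)$, combines $(3.14)$--$(3.17)$, and solves for $F$ itself by Lamel's almost-holomorphic implicit function theorem, $F_j=\psi_j(F,\overline F,(L^\alpha\overline F)_{1\le|\alpha|\le l},\overline G)$. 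The actual regularity mechanism is then a reflection by extension: by Theorem 2.9 of [BX] all the data extend almost analytically to one side $t\in\Gamma$, the functions $h_j(z,s,t)=\psi_j(\cdots)(z,s,-t)$ furnish tempered almost-analytic extensions to the opposite side with $b_+F_j=b_-h_j$, and the two-sided edge theorem V.3.7 of [BCH] yields smoothness (in the analytic case, the holomorphic implicit function theorem, one-sided holomorphic extension by strong pseudoconvexity, and Lemma 9.2.9 of [BER] play these roles). Your proposal replaces all of this with a claimed complete first-order system $dJ=\Omega(Z,\overline Z,J,\overline J)$ for the jet $J=\bigl(F,(L^\gamma\overline F)_{|\gamma|\le l}\bigr)$ and an ODE-type bootstrap.

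The closure of that system is precisely what fails as stated. Applying $\overline L_k$ or $T$ to a top entry $L^\gamma\overline F$, $|\gamma|=l$, and commuting via $[\overline L_k,L_j]=c_{kj}T+(\text{sections of }\mathcal V\oplus\overline{\mathcal V})$ produces \emph{interleaved} mixed derivatives $L^{\gamma_1}TL^{\gamma_2}\overline F$ of total order up to $l$; these are neither entries of $J$ nor conjugates of entries, while your reflection identities control only the pure strings $L^\beta\overline F$ with $|\beta|=l+1$. Counting unknowns against the available Levi-bracket relations, the linear system you would need to invert is underdetermined, and Levi nondegeneracy of $M$ does not rescue it: one must prolong the jet to include transversal derivatives and manufacture new identities by differentiating the defining equation in the $T$ direction, and whether such a prolongation terminates is exactly the (nontrivial) ``complete system'' problem, not a commutator bookkeeping detail. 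A sanity check that your claimed closure is too strong: $dJ=\Omega(\cdot,J,\overline J)$ would imply that $F$ is determined near $p$ by its $l$-jet at the single point $p$, which outstrips the known jet determination results in this setting (Corollary 2.4 requires $4$-jets). Two further points: your $\Psi_\beta$ must also depend on $F,\overline F$ themselves, since ${\bf a}$ contains the $\phi^*$-terms $\phi^*_{z_j'}(F,\overline F)$ and not just $\delta_{j,n}\overline F_j$; and in the smooth case $M$ is an abstract, possibly non-integrable CR manifold, so the Baouendi--Treves approximation you invoke is not available --- the paper circumvents this through the integrable structure $\widehat{\mathcal V}$ agreeing with $\mathcal V$ to infinite order at $p$ together with the one-sided extension and edge-of-the-wedge argument, for which your proposal offers no substitute.
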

\begin{proof}
We first prove Theorem \ref{thm4} in the smooth case. Although $M'$ is different from the one in [BX],  the proof of theorem 4.8 in [BX] applies to establish Theorem \ref{thm4} which involves applications of Lemma \ref{lemma1} above and Theorem
V.3.7 in [BCH]. Assume $p=0.$ From Lemma 3.2 and the assumption, after a suitable biholomorphic change of coordinates, we conclude that there exist multiindices $\{\beta_{n+1},\dots, \beta_{n+l-1}\}$ with $1 < |\beta_{i}| \leq l,$ such that
\begin{equation}
\widetilde{\bf{a}}|_{0}=(0,\cdots,0,\frac{\sqrt{-1}}{2}),
\left(\begin{array}{c}
  L_{1}\bf{a}|_{0} \\
  \cdots \\
  L_{n}\bf{a}|_{0} \\
  L^{\beta_{n+1}}\bf{a}|_{0}  \\
  \cdots \\
  L^{\beta_{n+l-1}}\bf{a}|_{0}
\end{array}\right)=\left(\begin{array}{ccc}
                     {\bf{B}}_{n+l-1} & {\bf{0}} & {\bf{b}}
                   \end{array}\right).
\end{equation}

 Indeed, the form (3.12) is all that is needed to use the proof of Theorem 4.8 to arrive at the following:

There are CR functions $G_i^j$ of smoothness class $C^{N+1-n-l}$ defined in a neighborhood $O$ of $0$ in $M$ such that :
\begin{equation}
a_{j}-\sum_{i=1}^{n+l-1}G_{i}^{j}a_{i}-G_{N+1}^{j}a_{N+1}=0, \,\, n+l \leq j \leq N.
\end{equation}
That is, in $O$,
\begin{equation}\label{eqneG}
\delta_{j,n}F_{j}+\overline{\phi_{z'_{j}}^{*}}-\sum_{i=1}^{n+l-1}\overline{G_{i}^{j}}(\delta_{i,n}F_{i}+\overline{\phi_{z'_{i}}^{*}})-
\overline{G_{N+1}^{j}}(\frac{1}{2\sqrt{-1}}+\overline{\phi_{z'_{N+1}}^{*}})=0.
\end{equation}
We also have,

\begin{equation}\label{eqFde}
-\frac{F_{N+1}-\overline{F_{N+1}}}{2\sqrt{-1}}+F_{1}\overline{F_{1}}+
\cdots +F_{n}\overline{F_{n}}-F_{n+1}\overline{F_{n+1}}- \cdots - F_N\overline{F_N}+\phi^{*}(F,\overline{F})=0;
\end{equation}
for $1 \leq j \leq n,$
\begin{equation}\label{eqlFg}
\frac{L_{j}\overline{F_{N+1}}}{2\sqrt{-1}}+F_{1}L_{j}\overline{F_{1}}+\cdots+F_{n}
L_{j}\overline{F_{n}} - F_{n+1}L_j\overline{F_{n+1}} -\cdots - F_N L_j\overline{F_N}+L_{j}\phi^{*}(F,\overline{F})=0;
\end{equation}
and for $n+1 \leq t \leq n+l-1,$
\begin{equation}\label{eqlFd}
\frac{L^{\beta_{t}}\overline{F_{N+1}}}{2\sqrt{-1}}+F_{1}L^{\beta_{t}}\overline{F_{1}}+\cdots+F_{n}
L^{\beta_{t}}\overline{F_{n}}- F_{n+1}L^{\beta_{t}}\overline{F_{n+1}}- \cdots - F_{N}L^{\beta_{t}}\overline{F_{N}}+L^{\beta_{t}}\phi^{*}(F,\overline{F})=0.
\end{equation}

We recall the local coordinates $(x,y,s)\in \mathbb R^n\times \mathbb R^n\times \mathbb R$ that vanish at the central ponit $p\in M$. By Theorem 2.9 in [BX],  $G_{i}^{j},G_{N+1}^{j},F_{1},\cdots,F_{N+1}$ extend to almost analytic functions into a half-space $\{(x,y,s+it)\in U\times V\times \Gamma: \,(x,y,s)\in U\times V,\,t\in \Gamma\}$, with edge $M$ near $p=0$ for all $1 \leq i \leq n+l-1, n+l \leq j \leq N.$ Here $U\times V$ is a neighborhood of the origin in $\mathbb{C}^n \times \mathbb R$ and $\Gamma$ is an interval $(0,r)$ in $t-$space. We still denote the extended functions by $G_{i}^{j},G_{N+1}^{j},F_{1},\cdots,F_{N+1}.$  \bigskip

Equations $(3.14), (3.15), (3.16)$ and $(3.17)$ can be used to get a smooth map \newline $\Psi(Z',\overline{Z'},W)=(\Psi_{1},\cdots,\Psi_{N+1})$ defined in a neighborhood of $\{0\} \times \mathbb{C}^{q}$ in $\mathbb{C}^{N+1} \times \mathbb{C}^{q},$~ smooth in the first $N+1$ variables and polynomial in the last $q$ variables for some integer $q$, such that,
$$\Psi(F,\overline{F},(L^{\alpha}\overline{F})_{1 \leq |\alpha| \leq l},
\overline{G_{1}^{n+l}},\cdots,\overline{G^{n+l}_{n+l-1}},\overline{G_{N+1}^{n+l}},\cdots,\overline{G_{1}^{N}},\cdots,
\overline{G_{n+l-1}^{N}},\overline{G_{N+1}^{N}})=0$$
at $(z,s,0)$ with $(z,s) \in U \times V.$ Write
\begin{equation}\label{eqtng}
\overline{G}=(\overline{G_{1}^{n+l}},\cdots,\overline{G^{n+l}_{n+l-1}},\overline{G_{N+1}^{n+l}},\cdots,\overline{G_{1}^{N}},\cdots,
\overline{G_{n+l-1}^{N}},\overline{G_{N+1}^{N}}).
\end{equation}
Observe that
$$\Psi_{Z'}|_{(F(0),\overline{F}(0),(L^{\alpha}\overline{F})_{1\leq |\alpha|\leq l}(0),\overline{G}(0))}=\left(\begin{array}{ccc}
                   {\bf{0}}_{n+l-1} & {\bf{0}}_{N-n-l+1} & \frac{\sqrt{-1}}{2} \\
                   {\bf{B}}_{n+l-1} & {\bf{0}} & {\bf{b}} \\
                   {\bf{C}} & -{\bf{I}}_{N-n-l+1} & {\bf{0}}^{t}_{N-n-l+1}
                 \end{array}\right),
$$
where ${\bf{0}}_{m}$ is an $m-$dimensional zero row vector,
${\bf{C}}$ is a $(N-n-l+1)\times (n+l-1)$ matrix, ${\bf{I}}_{N-n-l+1}$
is the  $(N-n-l+1) \times (N-n-l+1)$ identity matrix and we recall that ${\bf{B}}_{n+l-1}$ is an invertible $(n+l-1)\times
(n+l-1)$ matrix, ${\bf{0}}$ is an $(n+l-1) \times(N-n-l+1)$ zero
matrix, ${\bf{b}}$ is an $(n+l-1)-$dimensional column vector.

 The matrix $\Psi_{Z'}|_{(F(0),\overline{F(0)},(L^{\alpha}\overline{F})_{1 \leq |\alpha|\leq l}(0),\overline{G}(0))}$ is invertible. By applying the ``almost holomorphic" implicit function theorem in [La1], we get a solution $\psi=(\psi_{1},\cdots,\psi_{N+1})$  from $\mathbb{C}^{N+1} \times \mathbb{C}^{q}$ to $\mathbb{C}^{N+1}$ satisfying for each multiindex $\alpha$, and each $j$,
 $$ D^{\alpha}\frac{\partial \psi_j}{\partial Z'_i}(Z',\overline{Z'},W)=0,\,\,\,\text{if}\,\,Z'=\psi(Z',\overline{Z'},W)$$
 and for each $1 \leq j \leq N+1,$

$$F_{j}=\psi_{j}(F,\overline{F},(L^{\alpha}\overline{F})_{1 \leq |\alpha| \leq l},
\overline{G})$$
at $(z,s,0)$ with $(z,s) \in U \times V.$
The map $\psi$ is smooth in all variables and holomorphic in $W$. For each $j=1,\cdots,n,$ we denote by $M_{j}$ smooth extensions of $L_{j}$ to $U \times V \times \mathbb{R}$ given by

$$
M_{j}=\frac{\partial}{\partial \overline{z}_{j}} + A(x,y,s,t)\frac{\partial}{\partial s} + \sum_{k=1}^{n} B_{jk}(x,y,s,t)\frac{\partial}{\partial z_{k}}
$$
where the $B_{jk}$ and $A$ are smooth extensions of the corresponding coefficients of the $L_j$ satisfying
\begin{equation}\label{eqn220}
\overline{\partial}_wA(x,y,s,t),\overline{\partial}_wB_{jk}(x,y,s,t)=O(|t|^{m}),\,\,\,\forall m=1,2,\cdots.
\end{equation}

For each $1 \leq j \leq N+1,$ set
$$h_{j}(z,s,t)=\psi_{j}(F(z,s,-t),\overline{F}(z,s,-t),(M^{\alpha}\overline{F})_{1 \leq |\alpha| \leq l}(z,s,-t),\overline{G}(z,s,-t))$$ and shrink $U$ and $V$ and
choose $\delta$ in such a way that each $h_{j}$  is defined and
continuous in $\overline{\Omega_{-}}$ where $\Omega_{-}=\{(x,y,s+it):\,(x,y,s)\in U\times V,\,t\in -\Gamma, |t| \leq \delta \}$. The arguments in [BX] showed the estimates:

\[
\left |D_x^{\alpha}D_y^{\beta}D_s^{\gamma}h_j(z,s,t)\right |\leq \frac{C}{|t|^{\lambda}},\,\,\text{for some $C,\lambda>0$}
\]
and
\[
 D_x^{\alpha}D_y^{\beta}D_s^{\gamma}\overline{\partial}_{w}h_j(z,s,t)=O(|t|^m),\,\, \forall  m=1,2,\dots
 \] for $t\in -\Gamma$, $1 \leq j \leq N+1.$

Notice that the $F_{j}$ satisfy similar estimates for $t\in\Gamma$, and  $b_{+}F_{j}=b_{-}h_{j}$ for each $1\leq j \leq N+1.$ Applying Theorem V.3.7 in [BCH], we conclude that $F$ is smooth near $p$. This establishes Theorem $3.5$ in the smooth case.

\bigskip

The proof of Theorem \ref{thm4} in the real analytic case is similar and so we will only briefly indicate the modifications that are needed. With $M, M', F$ as in Theorem \ref{thm2.1}, we will show that the map $F$ is real analytic at $p$ which we assume is the origin. Since $\phi^{\ast}$ and the $L_j$ are real analytic now, equations $(3.14)-(3.17)$ imply that there is a real analytic map  \newline $\Psi(Z',\overline{Z'},W)=(\Psi_{1},\cdots,\Psi_{N+1})$ defined in a neighborhood of $\{0\} \times \mathbb{C}^{q}$ in $\mathbb{C}^{N+1} \times \mathbb{C}^{q}$, polynomial in the last $q$ variables for some integer $q$, such that,
$$\Psi(F,\overline{F},(L^{\alpha}\overline{F})_{1 \leq |\alpha| \leq l},
\overline{G_{1}^{n+l}},\cdots,\overline{G^{n+l}_{n+l-1}},\overline{G_{N+1}^{n+l}},\cdots,\overline{G_{1}^{N}},\cdots,
\overline{G_{n+l-1}^{N}},\overline{G_{N+1}^{N}})=0$$
at $(z,s,0)$ with $(z,s) \in U \times V$. Since the matrix $\Psi_{Z}$ is invertible at the central point, by the holomorphic version of the implicit function theorem, we get a holomorphic map $\psi=(\psi_1,\dots,\psi_{N+1})$ such that near the origin,

$$
F_ j=\psi_j(\overline F,(L^{\alpha}\overline{F})_{1\leq |\alpha|\leq l},\overline G),\,\,\,1\leq j\leq N+1,
$$
where $\overline{G}$ is as in equation  $(\ref{eqtng}).$ We may assume that near the origin, $M$ is given by $\{(z,w)\in \mathbb C^n\times \mathbb C:\,\mathrm{Im}\,w=\varphi(z,\overline z,s)\}$, where $\varphi$ is a real-valued, real analytic function with $\varphi(0)=0$, and $d\varphi(0)=0$. In the local coordinates $(z,s)\in \mathbb C^n\times \mathbb R$, we may assume that
$$
L_j=\frac{\partial}{\partial \overline{z}_j}-i\frac{\varphi_{\overline{z}_j}(z,\overline z,s)}{1+i\varphi_s(z,\overline z,s)}\frac{\partial}{\partial s},\,\,\,1\leq j\leq n.
$$
Since $\varphi$ is real analytic, we can complexify in the $s$ variable and write

$$
M_j=\frac{\partial}{\partial \overline{z}_j}-i\frac{\varphi_{\overline{z}_j}(z,\overline z,s+it)}{1+i\varphi_s(z,\overline z,s+it)}\frac{\partial}{\partial s},\,\,\,1\leq j\leq n
$$
which are holomorphic in $s+it$ and extend the vector fields $L_j$.
For each $1 \leq j \leq N+1,$ set
$$h_{j}(z,s,t)=\psi_{j}(\overline{F}(z,s,-t),(M^{\alpha}\overline{F})_{1 \leq |\alpha| \leq l}(z,s,-t),\overline{G}(z,s,-t)).$$
Since $M$ is strongly pseudo convex, the CR functions $F_j$ and $G_i$ all extend as holomorphic functions in $s+it$ to the side $t>0$. Hence the conjugates $\overline F_j(z,\overline z,s,-t)$ and $\overline G_i(z,\overline z,s,-t)$ extend holomorphically to the side $t<0$. It now follows that the $F_j$ extend as holomorphic functions to a full neighborhood of the origin (see Lemma 9.2.9 in [BER]).  This establishes Theorem $3.5$ in the real analytic case.

\end{proof}

\bigskip
{\bf End of the proof of Theorem \ref{thm1}:} Let $$\Omega_{1}=\{ p \in M : \mathrm{rank}_{N-n+1}(F,p)=N+1\},$$

$$\Omega_{2}=\{ p \in M: \mathrm{rank}_{N-n+1}(F,q) \leq N~\text{for all}~q~\text{in a neighborhood of}~p \},$$

$$\Omega=\{ p \in M : F~\text{is smooth in a neighborhood of}~p\} $$

Let $p\in \Omega_1$. Since $\mathrm{rank}_1(F,p)=n+1<N+1$, there is a minimum $m,\, 1<m\leq N-n+1$ such that $\mathrm{rank}_m(F,p)=N+1$.  By Theorem 2.3 in [BX], it follows that $F$ is smooth near $p,$ for any $p \in \Omega_{1},$ i.e., $ \Omega_{1} \subset \Omega.$
If $ p \in \Omega_{2}$ there is a neighborhood $\widetilde{O}$ of $p$, an integer $ 2 \leq d \leq N-n+1,$ and a sequence $\{p_{i}\}_{i=0}^{\infty} \subset \widetilde{O}$ converging to $p$ such that the following hold: $\mathrm{rank}_{d}(F,q)\leq n+d-1$ for all $q \in \widetilde{O},$ and $\mathrm{rank}_{d-1}(F,p_{i})=n+d-1,$ for all $i \geq 0.$ By applying Theorem $3.5$, $F$ is smooth near each $p_{i}$. Thus $\Omega$ is dense in $\Omega_{1} \cup \Omega_{2} $ and therefore dense in $M.$ This establishes Theorem \ref{thm1}.

\bigskip

{\bf Proof of Theorem $2.2$:} Let $\Omega_{1}, \Omega_{2}$ be as in the proof of Theorem $2.1.$ Note that at a point $p\in \Omega_1$, that is, at a point where the map $F$ is non-degenerate, Theorem 2 of [La2] shows that $F$ is real analytic. Thus as in the proof of Theorem $2.1,$ by applying Theorem \ref{thm4} in the real analytic case, we establish that $F$ is real analytic on a dense open subset of $M$.

\bibliographystyle{amsalpha}

\end{document}